\documentclass{amsart}
\usepackage[latin9]{inputenc}
\usepackage{amsthm}
\usepackage{amstext}
\usepackage{amssymb}
\usepackage{esint}

\makeatletter
\numberwithin{equation}{section}
\numberwithin{figure}{section}
\theoremstyle{plain}
\newtheorem{thm}{\protect\theoremname}
  \theoremstyle{definition}
  \newtheorem{defn}[thm]{\protect\definitionname}
  \theoremstyle{plain}
  \newtheorem{lem}[thm]{\protect\lemmaname}
  \theoremstyle{definition}
  \newtheorem{example}[thm]{\protect\examplename}
  \theoremstyle{remark}
  \newtheorem{rem}[thm]{\protect\remarkname}

\makeatother

  \providecommand{\definitionname}{Definition}
  \providecommand{\examplename}{Example}
  \providecommand{\lemmaname}{Lemma}
  \providecommand{\remarkname}{Remark}
\providecommand{\theoremname}{Theorem}

\begin{document}

\title{The convolution theorem of H\'ajek and Le Cam - revisited }

\author{Arnold Janssen and Vladimir Ostrovski}

\date{10.09.2013}
\begin{abstract}
The present paper establishes convolution theorems for regular estimators
when the limit experiment is non-Gaussian or of infinite dimension
with sparse parameter space. Applications are given for Gaussian shift
experiments of infinite dimension, the Brownian motion signal plus
noise model, Levy processes which are observed at discrete times and
estimators of the endpoints of densities with jumps. The method of
proof is also of interest for the classical convolution theorem of
H\'ajek and Le Cam. As technical tool we present an elementary approach
for the comparison of limit experiments on standard Borel spaces. 
\end{abstract}

\keywords{convolution theorem, shift family, Gaussian shift, signal plus noise
model, equivariant estimation, local asymptotic normality, comparison
of experiments, randomization criterion.}

\subjclass[2000]{Primary G2F10; secondary G2C05.}

\maketitle

\section{Introduction}

\label{intro} The classical convolution theorem of H\'ajek-Le Cam and
Inagaki, see H\'ajek(1970) and also Inagaki (1970), states that the
asymptotic distribution $Q$ of a sequences of properly normalized
regular estimators is a convolution product $Q=\nu\ast P$ of the
limit distribution $P$ of the asymptotically efficient estimators.
The convolution theorem was first proved under finite dimensional
local asymptotic normality (LAN). It is nowadays part of text books
like Bickel et al (1993), Pfanzagl (1994) and Witting and Müller-Funk
(1995), Le Cam and Yang (2000), for instance. The history was summerarized
by Le Cam, see Yang (1999). Le Cam pointed out that the case of infinite
dimensions beyond the Gaussian case is still open in general, see
also Le Cam (1994) and the recent discussion of Pözelberger, Schachermayer
and Strasser (2000). It is the aim of the present paper to reconsider
the convolution theorem again under very general conditions which
cover some results for infinite dimensions. Also we deal with the
estimation of infinite dimensional parameters when the parameter space
is not full. Based on Basu's theorem (about ancillary statistics)
we present another proof of the convolution which is also of interest
for classical situations. We will make some comments about different
kind of proofs of the convolution theorem. Bickel et al (1993) gave
an analytic proof via characteristic functions, see also Droste and
Wefelmeyer (1984). Another proof uses invariant means, see Pfanzagl
(1994) or van der Vaart (1988, 1989). In a certain sense this proof
is similar to Boll's proof of the convolution theorem for limit experiments,
see Boll (1955) or Strasser (1985), Sect. 38. Other proofs use the
Markov Kakutani fix point theorem, see Heyer (1979), Le Cam (1994)
or Pözelberger, Schachermayer and Strasser (2000). A very elegant
proof of the convolution theorem runs via limit experiments, see Le
Cam (1994), Theorem 1, Torgersen (1991), p. 401 ff and van der Vaart
(1991). Under LAN the sequence of underlying experiments converges
weakly to a limit shift experiment $\{P\ast\varepsilon_{\vartheta}:\vartheta\in\Theta\}$.
The (normalized) sequence of regular estimators converges to $Q\ast\varepsilon_{\vartheta}$
whenever $\vartheta$ is the true local parameter. Results about the
comparison of experiments show that $\{P\ast\varepsilon_{\vartheta}:\vartheta\in\Theta\}$
is more informative than $\{Q\ast\varepsilon_{\vartheta}:\vartheta\in\Theta\}$,
see also Section \ref{sec:4}. Then a Boll type convolution theorem
yields the result $Q=\nu\ast P$. Our approach is of this type but
only uses mostly elementary arguments.\\
 We will make some further comments about the literature. Early convolution
theorems for infinite dimensional parameter spaces were considered
by Moussatat (1976), Millar (1983, 1985), Strasser (1985) and Le Cam
(1994) mostly for cylinder measures. Van der Vaart (1991) gave a proof
for Gaussian shifts. Von den Heuvel and Klaassen (1999) and Sen (2000)
presented the connection to the Bayesian framework. Schick and Susarla
(1990) established an infinite dimensional convolution theorem with
application to the Kaplan Meier estimator. Further convolution results
about semiparametric estimation can be found in McNeney and Wellner
(2000). The connection between risk inequalities, spread inequalities
and the convolution theorem was discussed in Pfanzagl (2000). Beran
(1997) studied the relationship between bootstrap convergence and
the convolution theorem. On page 17 he mentioned that the convolution
theorem works analogously to Basu's theorem which is also central
in our proof.\\
 The paper is organized as follows. Section \ref{intro} introduces
first the main ideas for LAN sequences of experiments and univariate
parameters. This part is also of interest for teachers of statistics
courses since only elementary tools are required. As mentioned about
the proof is based on two steps. In a first step it is pointed out
that the asymptotic distributions $Q\ast\varepsilon_{\vartheta}$
of a regular sequence of estimators $T_{n}:\Omega_{n}\rightarrow\mathbb{R}$
lead to an experiment $\left\lbrace Q\ast\varepsilon_{\vartheta}:\vartheta\in\Theta\right\rbrace $
which is a randomization of a limit shift experiment (and less informative).
In the second step a comparison of $\left\lbrace Q\ast\varepsilon_{\vartheta}:\vartheta\in\Theta\right\rbrace $
with the limit experiment yields the convolution theorem. The main
ingredients of the proof are Le Cam's third Lemma and Basu's theorem
about ancillary statistics. As we will see in our sections \ref{sec:2}
and \ref{sec:4} this methodology and the present method of proof
also works for the asymptotically equivariant estimation of parameters
of infinite dimension. In this case the sample space and the parameter
space typically do not coincide and we have no full shift experiments.
Thus we take care about the convolution theorem when only a sparse
parameter space is available. Two applications are given for estimators
of infinite dimensional parameters. Example \ref{signal_noise_example}
deals with the convolution theorem for the signal plus noise model
with Brownian noise on $C\left[0,1\right]$. Example \ref{levy_pre_version}
works for non Gaussian Levy processes.\\
 Section \ref{preliminary_convolution} discusses variance inequalities
for unbiased estimators which may lead to asymptotic convolution theorems
whenever the asymptotic distributions are Gaussian. The core of the
paper is the comparison of limit experiments in Section \ref{sec:2}.
Under mild regularity assumptions a convolution theorem is established
for limit experiments. The proof of the convolution theorem is complete
if we are now citing Le Cam's theorem that limit experiments are more
informative than the limit $\left\lbrace Q\ast\varepsilon_{\vartheta}:\vartheta\in\Theta\right\rbrace $
of the estimators. This part is the topic of Section \ref{sec:4}.
Here we give an elementary proof of Le Cam's theorem for limit experiments
on standard Borel spaces. Moreover, the kernels required for the comparison
of experiments are constructed which transform one experiment in the
other one. Notice that these kernels are needed in Section \ref{sec:2}.
An interesting example is given when the endpoints of a distribution
with jumps of the densities have to be estimated.

Let us start with our introductory example. Let $(P_{t})_{t}$ be
a family of distributions on $\left(\Omega,\mathcal{A}\right)$ given
by a real parameter $t$. For an increasing sample size of $n$ independent
observations we like to estimate the parameter $t$ of the model.
Under regularity assumptions different estimators will be compared
locally around a fixed point $t=\vartheta_{0}$. Introduce by $t=\vartheta_{0}+\frac{\vartheta}{\sqrt{n}}$
a further local parameter $\vartheta\in\mathbb{R}$ and consider the
local model

\begin{equation}
P_{n,\vartheta}:=P_{\vartheta_{0}+\frac{\vartheta}{\sqrt{n}}}^{n}\label{lokal_model}
\end{equation}
on $\left(\Omega^{n},\mathcal{A}^{n}\right)$ with independent replications.\\

\textbf{Regularity of the model.} Let $\Theta\subset\mathbb{R}$ with
$0\in\Theta$ be the local parameter space. Suppose that the model
is local asymptotically normal (LAN), i.e. there exists some $\sigma>0$
and a central sequence of random variables $X_{n}:\Omega^{n}\rightarrow\mathbb{R}$
with

\begin{equation}
log\frac{dP_{n,\vartheta}}{dP_{n,0}}-[\vartheta X_{n}-\vartheta^{2}\sigma^{2}/_{2}]\rightarrow0\label{log_lacy_convergency}
\end{equation}
in $P_{n,0}$-probability for $\vartheta\in\Theta$, where

\begin{equation}
\mathcal{L}(X_{n}|P_{n,0})\rightarrow N(0,\sigma^{2})\text{ weakly }\label{where}
\end{equation}
holds as $n\rightarrow\infty$. Recall that LAN implies

\begin{equation}
\mathcal{L}(X_{n}|P_{n,\vartheta})\rightarrow(N(\vartheta\sigma^{2},\sigma^{2})\label{lan_implies}
\end{equation}
weakly under the parameter $\vartheta$ and the experiments

\begin{equation}
(\Omega^{n},\mathcal{A}^{n},\{P_{n,\vartheta}\})\rightarrow(\mathbb{R},\mathcal{B},\{N(\vartheta\sigma^{2},\sigma^{2})\})\label{experiments}
\end{equation}
are weakly convergent in the sense of Le Cam, see also Strasser (1985).\\

Consider now a sequence of estimators $T_{n}:\Omega^{n}\rightarrow\mathbb{R}$
of the parameter $t$. It is well known that the asymptotic efficiency
of $T_{n}$ typically implies that the linearization

\begin{equation}
\sqrt{n}(T_{n}-\vartheta_{0})-\frac{X_{n}}{\sigma^{2}}\rightarrow0\label{linearization}
\end{equation}
holds in $P_{n,0}$-probability. Recall that local asymptotic minimax
estimators and Fisher efficient estimators at $\vartheta_{0}$ have
this property. Recall that $T_{n}$ is called Fisher efficient if
$nVar_{\vartheta_{0}}(T_{n})$ reaches the asymptotic Cramer-Rao bound
of the experiment $\{P_{n,\vartheta}\}$.
\begin{defn}
A sequence of estimators $T_{n}$ is called $\Theta$-regular if

\begin{equation}
\mathcal{L}(\sqrt{n}(T_{n}-(\vartheta_{0}+\frac{\vartheta}{\sqrt{n}}))|P_{n,\vartheta})\rightarrow Q\label{theta_regular}
\end{equation}
converges weakly as $n\rightarrow\infty$ for all $\vartheta\in\Theta$
where $Q$ does not depend on $\vartheta$. 
\end{defn}
The classical H\'ajek-Le Cam convolution theorem states that the asymptotic
distribution $Q$ of a sequence of $\mathbb{R}$-regular estimators
is more spread out than $N(0,\frac{1}{\sigma^{2}})$ which is the
asymptotic distribution of $\frac{X_{n}}{\sigma^{2}}$. Although various
different proofs exist, also in text books, we will present a slightly
different proof which indicates the crucial steps of more general
convolution theorems with restricted parameter sets $\Theta$ of infinite
dimensions and also for non Gaussian limit experiments. Our method
of proof will first be presented for the classical one dimensional
LAN case.
\begin{thm}
\label{preliminary_convolution} Suppose that LAN holds for the parameter
set $\Theta\subset\mathbb{R}$ where the closure of $\Theta$ has
non empty interior, $\stackrel{\circ}{\overline{\Theta}}\not=\emptyset$.
Let $T_{n}$ be a sequence of $\Theta$-regular estimators, i.e. $\mathcal{L}(\sqrt{n}(T_{n}-\vartheta_{0})|P_{n,\vartheta}))\rightarrow Q\ast\varepsilon_{\vartheta}$
holds for each $\vartheta\in\Theta$. Then

(a) For each $\vartheta$ the vector of random variables

\begin{equation}
(X_{n},\sqrt{n}(T_{n}-\vartheta_{0})-\frac{X_{n}}{\sigma^{2}})\label{random_variables}
\end{equation}
are convergent in distribution to a pair of independent random variables
$(X,Z)$ on $\mathbb{R}^{2}$. Let $\nu$ denote the weak limit distribution
of the second component $Z$ under $\vartheta=0$, i.e.

\begin{equation}
\mathcal{L}(\sqrt{n}(T_{n}-\vartheta_{0})-\frac{X_{n}}{\sigma^{2}}|P_{n,0})\rightarrow\nu.\label{second_component}
\end{equation}
Then (\ref{second_component}) also holds under $P_{n,\vartheta}$
with the same limit $\nu$ independent of $\vartheta\in\Theta$.

(b) For fixed $\vartheta$ the limit distribution of (\ref{random_variables})
is given by

\begin{equation}
N(\vartheta\sigma^{2},\sigma^{2})\otimes\nu\label{lim_distribution_given_by}
\end{equation}

and the convolution theorem

\begin{equation}
Q\ast\varepsilon_{\vartheta}=\nu\ast N(\vartheta,\frac{1}{\sigma^{2}})\label{convolution_theorem}
\end{equation}
holds for all $\vartheta\in\Theta$.

(c) The sequence $T_{n}$ is asymptotically efficient at $\vartheta_{0}$,
i.e. (\ref{linearization}) holds in $P_{n,0}$ probability, iff the
weak limit $Q\ast\varepsilon_{\vartheta}$ of $\sqrt{n}(T_{n}-\vartheta_{0})\;$
w.r.t. $\; P_{n,\vartheta}$ is $N(\vartheta\sigma^{2},\sigma^{2})$
at least for one $\vartheta\in\Theta$. 
\end{thm}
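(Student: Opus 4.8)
The plan is to carry out the two-step programme of the introduction while keeping an auxiliary randomization alive in the limit. Write $Y_{n}:=\sqrt{n}(T_{n}-\vartheta_{0})$ and $Z_{n}:=Y_{n}-X_{n}/\sigma^{2}$, so that (\ref{second_component}) is a statement about $\mathcal{L}(Z_{n}|P_{n,0})$. First I would note that $\mathcal{L}(Y_{n}|P_{n,0})\to Q$ (the case $\vartheta=0$ of regularity) together with $\mathcal{L}(X_{n}|P_{n,0})\to N(0,\sigma^{2})$ makes $(X_{n},Z_{n})$ tight under $P_{n,0}$, so along a subsequence $\mathcal{L}(X_{n},Z_{n}|P_{n,0})\to H_{0}$ with $N(0,\sigma^{2})$ as first marginal. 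By the LAN expansion (\ref{log_lacy_convergency}) the triple $(X_{n},Z_{n},\log\frac{dP_{n,\vartheta}}{dP_{n,0}})$ then converges under $P_{n,0}$ to $(X,Z,\vartheta X-\vartheta^{2}\sigma^{2}/2)$, and since $X$ is Gaussian the normalization $E\exp(\vartheta X-\vartheta^{2}\sigma^{2}/2)=1$ holds; Le Cam's third lemma therefore gives, for each $\vartheta\in\Theta$, $\mathcal{L}(X_{n},Z_{n}|P_{n,\vartheta})\to H_{\vartheta}$ with $dH_{\vartheta}/dH_{0}(x,z)=\exp(\vartheta x-\vartheta^{2}\sigma^{2}/2)$. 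The crucial structural observation is that this density depends on $x$ only: disintegrating $H_{0}(dx,dz)=N(0,\sigma^{2})(dx)\,K(x,dz)$ with a regular conditional kernel $K$, one has $H_{\vartheta}(dx,dz)=N(\vartheta\sigma^{2},\sigma^{2})(dx)\,K(x,dz)$ with the \emph{same} kernel $K$. Thus in the limit experiment $\{H_{\vartheta}:\vartheta\in\Theta\}$ the coordinate $X$ is sufficient, and --- because $\overline{\Theta}$ has non-empty interior --- the family $\{N(\vartheta\sigma^{2},\sigma^{2}):\vartheta\in\Theta\}$ of its marginals is complete.

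Next I would bring in regularity (\ref{theta_regular}): it gives $\mathcal{L}(Y_{n}|P_{n,\vartheta})\to Q\ast\varepsilon_{\vartheta}$, whereas from $Y_{n}=X_{n}/\sigma^{2}+Z_{n}$ and $(X_{n},Z_{n})\to H_{\vartheta}$ the continuous mapping theorem gives $\mathcal{L}(Y_{n}|P_{n,\vartheta})\to\mathcal{L}(X/\sigma^{2}+Z\,|\,H_{\vartheta})$. Equating characteristic functions and using $\int e^{i\lambda x/\sigma^{2}}N(\vartheta\sigma^{2},\sigma^{2})(dx)=e^{i\lambda\vartheta}e^{-\lambda^{2}/(2\sigma^{2})}$, one obtains, for every $\lambda$ and all $\vartheta\in\Theta$,
\[
  \int G_{\lambda}\,dN(\vartheta\sigma^{2},\sigma^{2})=0,\qquad G_{\lambda}(x):=e^{i\lambda x/\sigma^{2}}\Bigl(\widehat{K}(\lambda;x)-e^{\lambda^{2}/(2\sigma^{2})}\widehat{Q}(\lambda)\Bigr),
\]
where $\widehat{K}(\lambda;x):=\int e^{i\lambda z}K(x,dz)$ and $G_{\lambda}$ is bounded.

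This is the decisive point, and it is exactly the mechanism of Basu's theorem; I expect essentially all the work to sit here. The map $\vartheta\mapsto\int G_{\lambda}\,dN(\vartheta\sigma^{2},\sigma^{2})$ is the restriction of an entire function, so, vanishing on $\Theta$, it vanishes by continuity on $\overline{\Theta}$, hence on the non-empty open set $\stackrel{\circ}{\overline{\Theta}}$, hence by the identity theorem on all of $\mathbb{R}$. Completeness of the Gaussian location family then forces $G_{\lambda}\equiv0$ almost everywhere, i.e.\ $\widehat{K}(\lambda;x)=e^{\lambda^{2}/(2\sigma^{2})}\widehat{Q}(\lambda)$ for a.e.\ $x$ and every $\lambda$; thus the conditional law of $Z$ given $X$ is free of $X$, that is, $Z$ is independent of $X$ --- the complete sufficient statistic of $\{H_{\vartheta}\}$ is independent of the ancillary $Z$ --- and $\mathcal{L}(Z|H_{\vartheta})=\nu$ with $\widehat{\nu}(\lambda)=e^{\lambda^{2}/(2\sigma^{2})}\widehat{Q}(\lambda)$, equivalently $Q=\nu\ast N(0,1/\sigma^{2})$. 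Since $\nu$ is thereby determined by $Q$ and $\sigma$, every subsequential limit of $(X_{n},Z_{n})$ under $P_{n,0}$ equals $N(0,\sigma^{2})\otimes\nu$, so the whole sequence converges and, by the third lemma, $\mathcal{L}(X_{n},Z_{n}|P_{n,\vartheta})\to N(\vartheta\sigma^{2},\sigma^{2})\otimes\nu$ for every $\vartheta\in\Theta$; this is part (a), and also establishes the limit (\ref{lim_distribution_given_by}) of (\ref{random_variables}) claimed in (b). The convolution identity (\ref{convolution_theorem}) completing (b) follows from $Q\ast\varepsilon_{\vartheta}=\mathcal{L}(X/\sigma^{2}+Z\,|\,H_{\vartheta})=\nu\ast N(\vartheta,1/\sigma^{2})$ by the independence just shown.

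Finally, for (c): the linearization (\ref{linearization}) says precisely that $Z_{n}\to0$ in $P_{n,0}$-probability, i.e.\ $\nu=\varepsilon_{0}$. By (b) this is equivalent to $Q\ast\varepsilon_{\vartheta}=N(\vartheta,1/\sigma^{2})$ (the efficient limit law) for every $\vartheta$; conversely, if $Q\ast\varepsilon_{\vartheta}=N(\vartheta,1/\sigma^{2})$ for even one $\vartheta\in\Theta$, comparing characteristic functions forces $\widehat{\nu}\equiv1$, hence $\nu=\varepsilon_{0}$ and (\ref{linearization}). The only genuinely delicate point in the whole scheme is the one flagged in the third paragraph --- passing from the possibly very thin set $\Theta$ to an open interval inside $\overline{\Theta}$ before completeness can be invoked --- which is exactly where the hypothesis $\stackrel{\circ}{\overline{\Theta}}\neq\emptyset$ is used; there, completeness of the Gaussian shift plays the role that invariant means or the Markov--Kakutani fixed-point theorem play in other proofs. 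The one remaining point, routine under LAN, is the contiguity of $(P_{n,\vartheta})$ and $(P_{n,0})$ that legitimizes the appeal to the third lemma.
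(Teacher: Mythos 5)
Your proposal is correct, and its Step 1 (tightness along a subsequence, the LAN expansion, Le Cam's third lemma giving a limit density $\exp(\vartheta x-\vartheta^{2}\sigma^{2}/2)$ that depends on the first coordinate only, hence sufficiency of that coordinate) coincides with the paper's. Where you genuinely diverge is the independence step. The paper proves that $\Pi_{2}-\Pi_{1}/\sigma^{2}$ is ancillary via the shift-equivariance identity for the conditional kernel established in Theorem \ref{extended_convolution_theorem} (which needs a set $\Theta_{1}$ with $\Theta_{1}+\Theta_{1}\subset\stackrel{\circ}{\overline{\Theta}}$ and bounded completeness of the restricted Gaussian family indexed by $\Theta_{1}$, cf. Remark \ref{rem:3.4}), and then invokes Basu's theorem. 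You instead disintegrate the limit law through the conditional kernel $K(x,dz)$, turn $\Theta$-regularity into the identity $\int G_{\lambda}\,dN(\vartheta\sigma^{2},\sigma^{2})=0$ for $\vartheta\in\Theta$, extend it to all real $\vartheta$ because the map is the restriction of an entire function (this is exactly where $\stackrel{\circ}{\overline{\Theta}}\neq\emptyset$ enters, replacing the paper's continuity and Wiener-closure considerations), and then apply completeness of the \emph{full} Gaussian location family to conclude that $\widehat{K}(\lambda;\cdot)$ is constant in $x$; no ancillarity statement, no Basu, and no semigroup condition on $\Theta_{1}$ are needed. What your route buys is a short, self-contained argument for the classical one-dimensional LAN case; what it gives up is generality: analytic continuation in $\vartheta$ and completeness of the full shift family are special to the Gaussian limit, whereas the paper's kernel/ancillarity/Basu mechanism is precisely the part that carries over to non-Gaussian limit experiments and to infinite-dimensional, sparse parameter sets in Sections \ref{sec:2} and \ref{sec:4}. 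Two small points you should make explicit when writing this up: the exceptional null set in ``$\widehat{K}(\lambda;x)=e^{\lambda^{2}/(2\sigma^{2})}\widehat{Q}(\lambda)$ for a.e.\ $x$'' depends on $\lambda$, so pass to a countable dense set of $\lambda$'s and use continuity of characteristic functions to obtain $K(x,\cdot)=\nu$ for a.e.\ $x$ (which also certifies that $e^{\lambda^{2}/(2\sigma^{2})}\widehat{Q}(\lambda)$ is indeed a characteristic function); and in (c) you tacitly read the efficient limit law as $N(\vartheta,1/\sigma^{2})$ rather than the $N(\vartheta\sigma^{2},\sigma^{2})$ printed in the statement, which is the intended reading since $N(\vartheta,1/\sigma^{2})$ is the limit law of $X_{n}/\sigma^{2}$ under $P_{n,\vartheta}$.
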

An elegant proof of the convolution theorem is based on the third
Lemma of Le Cam which is summarized for LAN experiments.
\begin{lem}
\label{conseq_third_lecam_lemma} Suppose that $S_{n}:\Omega^{n}\rightarrow\mathbb{R}$
is a further sequence of statistics with weak limit law

\begin{equation}
\mathcal{L}((X_{n},S_{n})|P_{n,o})\rightarrow\mu_{0}\label{weak_limit_law}
\end{equation}
on $\mathbb{R}^{2}.$ Then (\ref{weak_limit_law}) is weakly convergent
also under each sequence $P_{n,\vartheta}$ to the distribution $\mu_{\vartheta}$
on $\mathbb{R}^{2}$ with $\mu_{\vartheta}\ll\mu_{0}$ and

\begin{equation}
\frac{d\mu_{\vartheta}}{d\mu_{0}}(x,y)=exp(\vartheta x-\frac{1}{2}\vartheta^{2}\sigma^{2}).\label{preparations_end}
\end{equation}

\end{lem}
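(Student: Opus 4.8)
The plan is to run the classical change-of-measure argument underlying Le Cam's third lemma, carrying the extra coordinate $S_{n}$ along. Write $\Lambda_{n}:=\log\frac{dP_{n,\vartheta}}{dP_{n,0}}$. By the LAN expansion (\ref{log_lacy_convergency}) we have $\Lambda_{n}=\vartheta X_{n}-\tfrac{1}{2}\vartheta^{2}\sigma^{2}+R_{n}$ with $R_{n}\to0$ in $P_{n,0}$-probability, so by the hypothesis (\ref{weak_limit_law}) and Slutsky's theorem the triple $(X_{n},S_{n},\Lambda_{n})$ converges in distribution under $P_{n,0}$ to $(X,S,\vartheta X-\tfrac{1}{2}\vartheta^{2}\sigma^{2})$, where $\mathcal{L}(X,S)=\mu_{0}$; note $\mathcal{L}(X)=N(0,\sigma^{2})$ by (\ref{where}). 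Applying the continuous mapping theorem to $(x,y,\lambda)\mapsto(f(x,y),e^{\lambda})$ then gives, for every bounded continuous $f:\mathbb{R}^{2}\to\mathbb{R}$, joint convergence of $\bigl(f(X_{n},S_{n}),e^{\Lambda_{n}}\bigr)$ under $P_{n,0}$ to $\bigl(f(X,S),\exp(\vartheta X-\tfrac{1}{2}\vartheta^{2}\sigma^{2})\bigr)$.

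The crucial step is to upgrade distributional convergence to convergence of the relevant expectations. Since $e^{\Lambda_{n}}=dP_{n,\vartheta}/dP_{n,0}$ on the $P_{n,0}$-support, one always has $E_{P_{n,0}}[e^{\Lambda_{n}}]\le1$, whereas the limit satisfies $E[\exp(\vartheta X-\tfrac{1}{2}\vartheta^{2}\sigma^{2})]=1$ because $\mathcal{L}(X)=N(0,\sigma^{2})$. Combining $\limsup_{n}E_{P_{n,0}}[e^{\Lambda_{n}}]\le1$ with the distributional convergence $e^{\Lambda_{n}}\Rightarrow\exp(\vartheta X-\tfrac{1}{2}\vartheta^{2}\sigma^{2})$ and Fatou's lemma forces $E_{P_{n,0}}[e^{\Lambda_{n}}]\to1$, which by the Vitali/Scheff\'e criterion is precisely uniform integrability of $\{e^{\Lambda_{n}}\}$ under $P_{n,0}$; it also shows that $P_{n,\vartheta}$ is asymptotically absolutely continuous with respect to $P_{n,0}$, so that $E_{P_{n,\vartheta}}[g(X_{n},S_{n})]=E_{P_{n,0}}[g(X_{n},S_{n})e^{\Lambda_{n}}]+o(1)$ is legitimate. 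For bounded continuous $f$ the product $f(X_{n},S_{n})e^{\Lambda_{n}}$ is dominated by $\|f\|_{\infty}e^{\Lambda_{n}}$, hence uniformly integrable under $P_{n,0}$, and converges in distribution to $f(X,S)\exp(\vartheta X-\tfrac{1}{2}\vartheta^{2}\sigma^{2})$, so
\[
E_{P_{n,\vartheta}}[f(X_{n},S_{n})]=E_{P_{n,0}}[f(X_{n},S_{n})e^{\Lambda_{n}}]+o(1)\;\longrightarrow\;\int f(x,y)\,e^{\vartheta x-\frac{1}{2}\vartheta^{2}\sigma^{2}}\,d\mu_{0}(x,y).
\]

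Defining $\mu_{\vartheta}$ on $\mathbb{R}^{2}$ by $\frac{d\mu_{\vartheta}}{d\mu_{0}}(x,y)=\exp(\vartheta x-\tfrac{1}{2}\vartheta^{2}\sigma^{2})$, the last display is exactly the assertion $\mathcal{L}((X_{n},S_{n})\mid P_{n,\vartheta})\to\mu_{\vartheta}$ weakly, with the stated density; and $\mu_{\vartheta}$ is a probability measure since its total mass is $\int e^{\vartheta x-\frac{1}{2}\vartheta^{2}\sigma^{2}}\,dN(0,\sigma^{2})(x)=1$, the $x$-marginal of $\mu_{0}$ being $N(0,\sigma^{2})$. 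The main obstacle is the convergence-of-mass (uniform integrability) step for the likelihood ratios; an alternative to invoking Vitali's theorem is to truncate $e^{\Lambda_{n}}$ at a level $M$, pass to the limit by bounded continuity, and let $M\to\infty$, bounding the remainder by $E_{P_{n,0}}[e^{\Lambda_{n}}\mathbf{1}\{e^{\Lambda_{n}}>M\}]\le1-E_{P_{n,0}}[e^{\Lambda_{n}}\wedge M]\to1-E[\exp(\vartheta X-\tfrac{1}{2}\vartheta^{2}\sigma^{2})\wedge M]\to0$. Everything else reduces to Slutsky's theorem and the continuous mapping theorem.
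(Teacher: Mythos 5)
Your argument is correct. Note, however, that the paper does not actually prove this lemma at all: immediately after the statement it declares formula (\ref{preparations_end}) to be ``a direct consequence of Le Cam's third lemma in the form given by H\'ajek, \v{S}idak and Sen (1999), 7.1.4'' together with the references cited in the proof of Lemma \ref{lem:4:4}, where the only additional ingredient is contiguity of $(P_{n,\vartheta})$ with respect to $(P_{n,0})$, obtained from Le Cam's first lemma. What you have written is, in effect, a self-contained proof of exactly that cited result in the LAN setting: the chain Slutsky/continuous mapping for the triple $(X_{n},S_{n},\Lambda_{n})$, the total-mass argument $E_{P_{n,0}}[e^{\Lambda_{n}}]\le 1$ versus $E[\exp(\vartheta X-\tfrac12\vartheta^{2}\sigma^{2})]=1$ forcing asymptotic uniform integrability of the likelihood ratios (this is precisely the contiguity step, here done by hand), the control of the singular part of $P_{n,\vartheta}$, and the truncation at level $M$ to pass to the limit in $E_{P_{n,0}}[f(X_{n},S_{n})e^{\Lambda_{n}}]$ are all sound, and the verification that $\mu_{\vartheta}$ has total mass one correctly uses that the first marginal of $\mu_{0}$ is $N(0,\sigma^{2})$ by (\ref{where}). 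The trade-off is clear: the paper's citation route is shorter and rests on the general third lemma, which it needs anyway in Lemma \ref{lem:4:4} for non-Gaussian limit experiments, whereas your route is elementary and self-contained, proving contiguity and the change-of-measure formula in one stroke and exploiting the Gaussian structure of the LAN expansion to read off the density $\exp(\vartheta x-\tfrac12\vartheta^{2}\sigma^{2})$ directly; the one technicality you gloss over, the event $\{\Lambda_{n}=-\infty\}$ where $dP_{n,\vartheta}/dP_{n,0}$ vanishes, is harmless since by (\ref{log_lacy_convergency}) and tightness of $X_{n}$ its $P_{n,0}$-probability tends to zero, but it is worth a sentence.
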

Observe that formula (\ref{preparations_end}) is a direct consequence
of Le Cam's third lemma of the form given by H\'ajek, $\check{S}$idak
and Sen (1999), 7.1.4 and the references given in the proof of Lemma
\ref{lem:4:4} below. After these preparations we will indicate the
crucial steps of the proof of the convolution theorem.
\begin{proof}
\emph{(of Theorem }\ref{preliminary_convolution}) 

Step 1. According to (\ref{where}) and (\ref{theta_regular}) the
pair of random variables (\ref{second_component}) is tight on $\mathbb{R}^{2}$
for $\vartheta=0$. Then we find a subsequence $\{m\}\subset\mathbb{N}$
such that

\begin{equation}
(X_{m},\sqrt{m}(T_{m}-\vartheta_{0}))\rightarrow(X,T)\label{subsequence}
\end{equation}
is distributional convergent to some $(X,T)$ under $P_{n,0}$ along
our subsequence. If $\mu_{0}$ denotes the distribution of $(X,T)$
under $\vartheta=0$ then Le Cam's third Lemma implies that the distributional
convergence along $\{m\}$ of (\ref{subsequence}) also holds under
$P_{n,\vartheta}$ with limit law $\mu_{\vartheta}$ given by (\ref{preparations_end})
for each $\vartheta\in\Theta$. Let $\Pi_{i}:\mathbb{R}^{2}\rightarrow\mathbb{R}$
denote the projections on the coordinates for $i=1,2$. By (\ref{where})
and (\ref{theta_regular}) we have

\begin{equation}
\mathcal{L}(\frac{\Pi_{1}}{\sigma^{2}}|\mu_{\vartheta})=N(\vartheta,\frac{1}{\sigma^{2}})\text{ and }\mathcal{L}(\Pi_{2}\vert\mu_{\vartheta})=Q\ast\varepsilon_{\vartheta}\label{projections_convolution}
\end{equation}
for all $\vartheta\in\Theta$.

Step 2. By Neyman's criterion about sufficiency the projection $\Pi_{1}$
is $\{\mu_{\vartheta}:\vartheta\in\Theta\}$ sufficient, confer (\ref{preparations_end}).
Within the language of comparison experiments this means that

\[
\{N(0,\frac{1}{\sigma_{2}})\ast\varepsilon_{\vartheta}:\vartheta\in\Theta\}
\]
is more informative as $\{Q\ast\varepsilon_{\vartheta}:\vartheta\in\Theta\}.$
For full parameter spaces $\Theta=\mathbb{R}$ the convolution theorem
(\ref{convolution_theorem}) follows from Boll's convolution theorem
of shift experiments. Boll's theorem requires an analytic proof which
sometimes uses fix point methods. We will substitute this part by
more elementary arguments which give us the full result also when
$\Theta$ is not full. For the details we refer to the proof of Theorem
\ref{extended_convolution_theorem}. The proof runs as follows. It
is easy to see that

(i) $\Pi_{1}$ is sufficient and boundedly complete for $\{\mu_{\vartheta}:\vartheta\in\Theta\}$.
Observe that the densities of $\mu_{\vartheta}$ can be extended to
the parameter space $\stackrel{\circ}{\overline{\Theta}}$ and that
the bounded completeness has to be checked for $\stackrel{\circ}{\overline{\Theta}}$
only by continuity arguments, see Remark \ref{rem:3.4}. Without restrictions
we may assume that $0\in\stackrel{\circ}{\overline{\Theta}}$ holds.
Otherwise the family (\ref{preparations_end}) can be shifted due
to the translation invariance of the Gaussian shift family. Also we
can find an open set $\Theta_{1}\subset\stackrel{\circ}{\overline{\Theta}}$
with $\Theta_{1}+\Theta_{1}\subset\stackrel{\circ}{\overline{\Theta}}$
where $\left\lbrace \mu_{\vartheta}:\vartheta\in\Theta_{1}\right\rbrace $
is boundedly complete as required in the proof of Theorem \ref{extended_convolution_theorem}.
\\

(ii) It is not hard to check that $\Pi_{2}-\frac{\Pi_{1}}{\sigma^{2}}$
is ancillary w.r.t. $\Theta_{1}$. Details can be found in (\ref{eq:2.10})-(\ref{eq:2.12})
of the proof of Theorem \ref{extended_convolution_theorem} below.
Consequentely, Basu's theorem, see Pfanzagl (1994), implies that

\begin{equation}
\Pi_{1}\text{ and }\Pi_{2}-\frac{\Pi_{1}}{\sigma^{2}}\label{Basu_theorem_implies}
\end{equation}
are independent under $\mu_{\vartheta}$ for each $\vartheta\in\Theta_{1}$,
i.e. (\ref{lim_distribution_given_by}) and the convolution theorem
(\ref{convolution_theorem}) hold.\\
 By the consideration of Fourier transform we see that the factor
$\nu$ of (\ref{convolution_theorem}) is unique. If we now turn to
(\ref{Basu_theorem_implies}) we see that the limit distributions
(\ref{subsequence}) must be the same $\mu_{0}$ for all subsequences
$\{m\}$ when (\ref{subsequence}) holds. By tightness this fact proves
the convergence of (\ref{subsequence}) along $n\in\mathbb{N}$ first
for $\vartheta=0$ and by Lemma \ref{conseq_third_lecam_lemma} for
all $\vartheta$. These arguments finish the proof of part (a) and
(b). Part (c) is now trivial since the efficiency of $T_{n}$ corresponds
to $\nu=\varepsilon_{0}$. 
\end{proof}
Along these lines of the proof more general convolution theorems will
be established.\\
 - The sample space and the parameter space may be of infinite dimension.\\
 - The parameter set $\Theta$ may be a restricted set (not a full
vector space).\\
 - If we turn to limit experiments other distributions than Gaussian
distributions are allowed.\\
The extended convolution theorems of Section \ref{sec:2} have various
application. As application we will consider two examples with parameter
and sample spaces of infinite dimension which can be treated by the
new convolution theorem.
\begin{example}
{[}Signal plus noise model with fixed finite sample size{]}\label{signal_noise_example}
Let $B_{1},\ldots,B_{n}$ denote $n$ independent standard Brownian
motions with values in $C[0,1]$. Our observations are the processes

\begin{equation}
X_{i}(t)=B_{i}(t)+\int_{0}^{t}\vartheta(u)du,\;\;0\leq t\leq1,\:1\leq i\leq n,\label{signal_noise}
\end{equation}
where the parameter $\vartheta$ belongs to a class square integrable
functions $\Theta\subset L_{2}[0,1]$ with respect to the uniform
distribution on $[0,1]$. We like to estimate the signal $h(\vartheta)$,

\begin{equation}
h:\Theta\longrightarrow C[0,1],\: h(\vartheta)(t)=\int_{0}^{t}\vartheta(u)du\label{signal}
\end{equation}
by estimators $T(X_{1},\ldots,X_{n})$,

\begin{equation}
T:C[0,1]^{n}\longrightarrow C[0,1].\label{estimators}
\end{equation}
The natural estimator is $S$,

\begin{equation}
S(X_{1},\ldots,X_{n})=\frac{1}{n}\sum_{i=1}^{n}X_{i}.\label{natural_estimator}
\end{equation}
At a fixed point $\vartheta_{0}\in\Theta$ this estimator can be compared
with competing estimators $T$. For this purpose introduce in addition
to the global parameter $\vartheta_{0}$ another local parameter $\eta\in L_{2}[0,1]$
by

\begin{equation}
\vartheta=\vartheta_{0}+\frac{\eta}{\sqrt{n}}.\label{another_local_parameter}
\end{equation}
Two different classes of estimator will be studied.

(a) (Equivariant estimation). Suppose that the estimator fulfills

\begin{equation}
\mathcal{L}\left(\sqrt{n}(T-h(\vartheta_{0}))\left|\vartheta_{0}+\frac{\eta}{\sqrt{n}}\right.\right)=\mathcal{L}\left(\left.\sqrt{n}\left(T-h(\vartheta_{0})\right)\right|\vartheta_{0}\right)\ast\varepsilon_{\eta}\label{estimator_fulfils}
\end{equation}
for $\eta$ with $\vartheta_{0}+\frac{\eta}{\sqrt{n}}\in\Theta.$
Under mild assumptions about the size of $\Theta$ the convolution
theorem

\begin{equation}
\mathcal{L}\left(\left.{\sqrt{n}}\left(T-h(\vartheta_{0})\right)\right|\vartheta_{0}\right)=\nu\ast\mathcal{L}\left(\left.\sqrt{n}\left(S-h(\vartheta_{0})\right)\right|\vartheta_{0}\right)\label{signal_noise_convolution}
\end{equation}
holds on $(C[0,1])$, see Example \ref{examp:3:6} below for details.\\

(b) (Unbiased estimation). Under various assumption it can be shown
that $S$ is the best unbiased estimator which is Fisher efficient
in the sense that $S$ attains the nonparametric Cramer-Rao bound,
see Janssen (2003) for related results.
\end{example}
Recall that a real Levy process $\left(Z_{t}\right)_{t\geq0}$ is
a stochastically continuous process with independent stationary increments. 
\begin{example}
\label{levy_pre_version} Let $\left(Z_{t}\right)_{t\geq0}$ be a
Levy process with absolutely continuous distributions $\mathcal{L}\left(Z_{t}\right)\ll\lambda$
for all $t>0$. At a given sequence of discrete times $0<t_{1}<t_{2}<\ldots$
the Levy process serves as error distribution of our observations
\begin{equation}
X=\left(Z_{t_{i}}+\vartheta_{i}\right)_{i\in\mathbb{N}}\in\mathbb{R}^{\mathbb{N}}\label{levy_observations}
\end{equation}
with unknown parameters $\vartheta=\left(\vartheta_{i}\right)_{i\in\mathbb{N}}\in\Theta\subset\mathbb{R}^{\mathbb{N}}$.
We are going to estimate the parameter $\vartheta$ by equivariant
estimators $T(X),T:\mathbb{R}^{\mathbb{N}}\rightarrow\mathbb{R}^{\mathbb{N}}$,
with 
\begin{equation}
\mathcal{L}\left(T\left(X\right)|\vartheta\right)=\mathcal{L}(T(X)|0)\ast\varepsilon_{\vartheta}\label{invariant_estimators}
\end{equation}
for $\vartheta\in\Theta$. Under the assumptions the identity $S=\mbox{id}$
is the best equivariant estimator in the sense that the convolution
theorem 
\begin{equation}
\mathcal{L}(T(X)|\vartheta)=\nu\ast\mathcal{L}(S(X)|\vartheta)\label{ct:3}
\end{equation}
holds for all $\vartheta\in\Theta$. The details are presented in
Example \ref{example_levy_pro}.\end{example}
\begin{defn}
Suppose that there exist some $\vartheta^{0}=\left(\vartheta_{i}^{0}\right)_{i\in\mathbb{N}}\in\mathbb{Q}^{\mathbb{N}}$,
such that 
\begin{equation}
\left\lbrace \left(\vartheta_{i}\right)_{i\in\mathbb{N}}\in\mathbb{Q}^{\mathbb{N}}:\vartheta_{i}=\vartheta_{i}^{0}\:\mbox{finally}\right\rbrace \subset\Theta\label{ct:4}
\end{equation}
holds. 
\end{defn}

\section{Unbiased estimation, variance inequalities and preliminary versions
of the convolution theorem}

In this section we will start with finite sample results for locally
unbiased estimation of vector valued statistical functionals. It is
shown that variance inequalities within convex classes of estimators
are linked to preliminary versions of the convolution theorem. Moreover,
it is shown that the existence of a sequence of locally minimum variance
estimators already imply a convolution theorem whenever the underlying
estimators are jointly asymptotically normal.

Let $g:\Theta\to W$ denote a statistical functional with values in
a real vector space $W$. Suppose that $I\subset\left\{ f:W\to\mathbb{R}\right\} $
is a set of linear functions and let $\mathcal{B}:=\sigma(f:f\in I)$
denote the $\sigma$-field generated by $I$ on $W$. Let $E=\left(\Omega,\mathcal{A},\left\lbrace P_{\vartheta}:\vartheta\in\Theta\right\rbrace \right)$
denote the underlying experiment. Throughout, we like to estimate
$g$ by $(\mathcal{A},\mathcal{B})$-measurable estimators 
\begin{equation}
T:(\Omega,\mathcal{A})\rightarrow(W,\mathcal{B}).\label{eq:1.1}
\end{equation}
Different estimators will now be compared at a fixed point $\vartheta_{0}\in\Theta$.
Locally at this point we will consider unbiased estimation of $g\left(\vartheta_{0}\right)$.
Assume that 

(A) $\mathcal{K}$ is a class of estimators $T:\;\Omega\to W$ with
$E_{\vartheta_{0}}(f(T))=f\left(g\left(\vartheta_{0}\right)\right)$
and $\mathrm{Var}_{\vartheta_{0}}(f(T))<\infty$ for all $f\in I$
such that $\mathcal{K}$ has the extended convexity property $a\mathcal{K}+(1-a)\mathcal{K}\subset\mathcal{K}$
for all $a\in\mathbb{R}$. \\
Observe that the distribution of an estimator $T$ is completely specified
by the distribution of the process $f(T)_{f\in I}$. 
\begin{lem}
\label{lem:1.1} Consider an estimator $S\in\mathcal{K}$ where assumption
(A) holds for $\mathcal{K}$. Then the following assertions are equivalent:

$\mbox{Var}_{\vartheta_{0}}\left(f\left(S\right)\right)=\min\limits _{T\in\mathcal{K}}\mbox{Var}_{\vartheta_{0}}\left(f\left(T\right)\right)$
~~~for all $f\in I$. 

For each $T\in\mathcal{K}$ the processes $\left(f\left(S\right)\right)_{f\in I}$
and $\left(f\left(T-S\right)\right)_{f\in I}$ are uncorrelated at
$\vartheta_{0}$,\\
 i.e. $\mbox{Cov}_{\vartheta_{0}}\left(f\left(S\right),f\left(T\right)-f\left(S\right)\right)=0$
for all $f\in I$. In this case we have 
\[
\mbox{Var}_{\vartheta_{0}}\left(f\left(T\right)\right)=\mbox{Var}_{\vartheta_{0}}\left(f\left(S\right)\right)+\mbox{Var}_{\vartheta_{0}}\left(f\left(T-S\right)\right).
\]
\end{lem}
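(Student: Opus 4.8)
The plan is to prove the equivalence of the two "best unbiased" characterizations by the standard Hilbert-space projection argument, working coordinate-wise in $f \in I$ and then upgrading to the joint (process) statement. Fix $f \in I$. Since $\mathcal{K}$ has the extended convexity property, for any $T \in \mathcal{K}$ and any $a \in \mathbb{R}$ the estimator $S + a(T-S) = aT + (1-a)S$ lies in $\mathcal{K}$ and is again unbiased for $g(\vartheta_0)$ in the sense that $E_{\vartheta_0}(f(S+a(T-S))) = f(g(\vartheta_0))$, with finite variance at $\vartheta_0$. Hence the real-valued function
\begin{equation*}
\varphi_{f,T}(a) := \mathrm{Var}_{\vartheta_0}\bigl(f(S) + a\,(f(T)-f(S))\bigr) = \mathrm{Var}_{\vartheta_0}(f(S)) + 2a\,\mathrm{Cov}_{\vartheta_0}(f(S), f(T)-f(S)) + a^2\,\mathrm{Var}_{\vartheta_0}(f(T)-f(S))
\end{equation*}
is a quadratic polynomial in $a$ whose value is an achievable variance within $\mathcal{K}$.

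For the implication from minimality to uncorrelatedness: if $S$ minimizes $\mathrm{Var}_{\vartheta_0}(f(\cdot))$ over $\mathcal{K}$, then $\varphi_{f,T}$ has a global minimum at $a=0$, so its derivative there vanishes, which forces $\mathrm{Cov}_{\vartheta_0}(f(S), f(T)-f(S)) = 0$. Conversely, if this covariance vanishes for every $T \in \mathcal{K}$, then the Pythagorean identity
\begin{equation*}
\mathrm{Var}_{\vartheta_0}(f(T)) = \mathrm{Var}_{\vartheta_0}\bigl(f(S) + (f(T)-f(S))\bigr) = \mathrm{Var}_{\vartheta_0}(f(S)) + \mathrm{Var}_{\vartheta_0}(f(T-S))
\end{equation*}
holds (using linearity of $f$, so $f(T)-f(S) = f(T-S)$), and since the last term is nonnegative we get $\mathrm{Var}_{\vartheta_0}(f(S)) \le \mathrm{Var}_{\vartheta_0}(f(T))$ for all $T \in \mathcal{K}$; as $S \in \mathcal{K}$ this is the asserted minimality. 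The displayed variance decomposition in the statement is exactly this Pythagorean identity, valid under either (hence both) of the equivalent conditions.

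The remaining point is the passage between the pointwise-in-$f$ statements and the "process" formulation: the claim speaks of the processes $(f(S))_{f\in I}$ and $(f(T-S))_{f\in I}$ being uncorrelated, i.e. $\mathrm{Cov}_{\vartheta_0}(f(S), f(T)-f(S)) = 0$ for all $f \in I$ — but one might worry about cross terms $\mathrm{Cov}_{\vartheta_0}(f_1(S), f_2(T-S))$ for $f_1 \ne f_2$. These are handled by polarization: apply the single-$f$ result to $f = f_1 + f_2 \in I$ — here I use that $I$ is a set of linear functions; if $I$ is not assumed closed under addition, one instead notes that the statement as written only requires the diagonal covariances to vanish, and the off-diagonal ones are not part of the claim. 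I expect the only genuine subtlety to be bookkeeping about what "uncorrelated processes" is meant to assert and ensuring all the variances and covariances invoked are finite — which is guaranteed by assumption (A) together with the Cauchy–Schwarz inequality — so there is no real obstacle; the proof is a direct translation of the orthogonal-projection characterization of the minimum-norm element in an affine subspace of $L^2(P_{\vartheta_0})$.
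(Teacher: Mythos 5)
Your proof is correct and is essentially the paper's argument (Rao's covariance method): the same quadratic-in-$a$ variation along $S+a(T-S)\in\mathcal{K}$ gives the derivative condition, and the converse is handled equivalently — you use the Pythagorean decomposition directly where the paper invokes Cauchy--Schwarz, a negligible difference. Your remarks on the process formulation and finiteness of moments are consistent with the lemma as stated, which only asserts the diagonal covariances $\mathrm{Cov}_{\vartheta_0}(f(S),f(T)-f(S))=0$ for $f\in I$.
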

\begin{proof}
We fix $f\in I$ and $\vartheta_{0}\in\Theta$. Suppose that (a) holds.
The estimator $T_{t}:=S+t\left(T-S\right)=\left(1-t\right)S+tT$ is
in $\mathcal{K}$ for all $t\in\mathbb{R}$. The function $t\mapsto\mbox{Var}_{\vartheta_{0}}\left(f\left(T_{t}\right)\right)$
has the minimum in $t=0$. This implies
\begin{eqnarray*}
0 & = & \left.\frac{\partial}{\partial t}\mbox{Var}_{\vartheta_{0}}\left(f\left(T_{t}\right)\right)\right|_{t=0}\\
 & = & \left.\frac{\partial}{\partial t}\left(\mbox{Var}_{\vartheta_{0}}\left(f\left(S\right)\right)+t^{2}\mbox{Var}_{\vartheta_{0}}\left(f\left(T-S\right)\right)+2t\mbox{Cov}_{\vartheta_{0}}\left(f\left(S\right),f\left(T\right)-f\left(S\right)\right)\right)\right|_{t=0}\\
 & = & 2\mbox{Cov}_{\vartheta_{0}}\left(f\left(S\right),f\left(T\right)-f\left(S\right)\right).
\end{eqnarray*}
Suppose that (b) holds. Then for each $S\in\mathcal{K}$ the equality
\[
0=\mbox{Cov}_{\vartheta_{0}}\left(f\left(S\right),f\left(T\right)-f\left(S\right)\right)=\mbox{Cov}_{\vartheta_{0}}\left(f\left(S\right),f\left(T\right)\right)-\mbox{Var}_{\vartheta_{0}}\left(f\left(S\right)\right).
\]
follows. As a consequence we have
\[
\mbox{Var}_{\vartheta_{0}}\left(f\left(S\right)\right)=\mbox{Cov}_{\vartheta_{0}}\left(f\left(S\right),f\left(T\right)\right)\leq\mbox{Var}_{\vartheta_{0}}\left(f\left(S\right)\right)^{\frac{1}{2}}\mbox{Var}_{\vartheta_{0}}\left(f\left(T\right)\right)^{\frac{1}{2}}
\]
and the inequality $\mbox{Var}_{\vartheta_{0}}\left(f\left(S\right)\right)\leq\mbox{Var}_{\vartheta_{0}}\left(f\left(T\right)\right)$
follows. 
\end{proof}
The present result is a slight but useful extension of Rao's covariance
method, see Lehmann (1983), p. 77. 
\begin{example}
The following classes $\mathcal{K}$ of estimators have the extended
convexity property: 

(a)The estimators $T$ where $f(T)$ is unbiased for our functional
$g$ and all $\vartheta\in\Theta$. 

(b)Let $\Theta$ be a subset of a linear space $V$ such that $g$
has an extension $g:V\rightarrow W$ as linear function. Suppose that
$\Omega=V$ and let $T$ be the strictly equivariant estimators (for
$g$), i.e. 
\[
T(x+\vartheta)=T(x)+g(\vartheta)
\]
for all $x\in V$ and all $\vartheta\in\Theta$. 
\end{example}
Lemma \ref{lem:1.1} is a preliminary version of a convolution theorem
which is expressed by the variance decomposition. The convolution
theorem 
\begin{equation}
\mathcal{L}\left(T|\vartheta_{0}\right)=\mathcal{L}\left(S|\vartheta_{0}\right)\ast\mathcal{L}\left(T-S|\vartheta_{0}\right)\label{eq:1.4}
\end{equation}
holds whenever $(f(S))_{f}$ and $(f(T-S))_{f}$ are independent under
$\vartheta_{0}$ for each $f\in I$. This will not be true in general
but if the vector $(f(S),f(T-S))$ is jointly Gaussian then the components
are independent since they are uncorrelated. If $I$ is a linear space
of functions the convolution theorem for the $f$-marginals then implies
(\ref{eq:1.4}). This simple observation leads to an asymptotic convolution
theorem of asymptotically normal locally unbiased estimators.

Consider a sequence of experiments 
\begin{equation}
E_{n}=\left(\Omega_{n},\mathcal{A}_{n},\left\{ P_{n,\vartheta}\;:\;\vartheta\in\Theta\right\} \right)\label{eq:1.5}
\end{equation}
and a sequence $g_{n}:\;\Theta\to W$ of statistical functionals.
For each $n$ let $\mathcal{K}_{n}$ be a class of unbiased estimators
of $g_{n}$ at $\vartheta_{0}$ so that assumption (A) holds. Suppose
as in Lemma \ref{lem:1.1} (a) that $S_{n}$ is a minimum variance
estimator at $\vartheta_{0}$ in $\mathcal{K}_{n}$ for each $n$.
\begin{thm}
\label{limit_convolution} Suppose that $I$ is a linear subspace
of functions on $W$. Let $T_{n}\in\mathcal{K}_{n}$ be a competing
sequence of estimators such that the joint distribution 
\begin{equation}
a_{n}\left(f\left(S_{n}\right)-E_{\vartheta_{0}}\left(f\left(S_{n}\right)\right),f\left(T_{n}\right)-E_{\vartheta_{0}}\left(f\left(T_{n}\right)\right)\right)\rightarrow\left(f(S),f(T)\right)\label{eq:1.6}
\end{equation}
is weakly convergent under $\vartheta_{0}$ to a centered Gaussian
random variable $\left(f(S),f(T)\right)$ for all $f\in I$, where
$a_{n}>0$ denotes a normalizing sequence. Assume that $S$ and $T$
are $W$-valued random variables. Let in addition 
\begin{equation}
\mathrm{Var}_{\vartheta_{0}}\left(a_{n}f\left(S_{n}\right)\right)\rightarrow\mathrm{Var}\left(f(S)\right)\mbox{ and }\mathrm{Var}_{\vartheta_{0}}\left(a_{n}f\left(T_{n}\right)\right)\rightarrow\mathrm{Var}\left(f(T)\right)\label{eq:1.7}
\end{equation}
hold for all $f\in I$ as $n\to\infty$. Then the convolution theorem
in the sense of equation (\ref{eq:1.4}) holds for the asymptotic
distributions on $W$. \end{thm}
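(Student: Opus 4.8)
The plan is to reduce the $W$-valued convolution statement to the one-dimensional statements already packaged in Lemma \ref{lem:1.1}, using the fact that the distribution of a $W$-valued estimator is determined by the law of the process $(f(T))_{f \in I}$ (as noted right after \eqref{eq:1.1}). So first I would fix an arbitrary $f \in I$ and apply the minimum-variance hypothesis on $S_n$ together with Lemma \ref{lem:1.1}(a)$\Rightarrow$(b): this gives $\mathrm{Cov}_{\vartheta_0}(f(S_n), f(T_n) - f(S_n)) = 0$ for every $n$, equivalently $\mathrm{Cov}_{\vartheta_0}(a_n f(S_n), a_n(f(T_n)-f(S_n))) = 0$. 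Note also that $T_n - S_n \in \mathcal{K}_n$ by the extended convexity property (take $a = -1$ in $a\mathcal{K}_n + (1-a)\mathcal{K}_n$, or rather combine $2S_n - T_n$ and $\ldots$; more directly $\mathcal{K}_n$ being closed under affine combinations contains $S_n + 1\cdot(T_n - S_n)$ trivially and $T_n-S_n = 0\cdot S_n + \ldots$), so $f(T_n - S_n)$ has finite variance at $\vartheta_0$ and $f(T_n-S_n) = f(T_n) - f(S_n)$ by linearity of $f$.

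Next I would pass to the limit in the second component. From \eqref{eq:1.6} the pair $a_n(f(S_n) - E_{\vartheta_0}f(S_n),\, f(T_n) - E_{\vartheta_0}f(T_n))$ converges weakly to the centered Gaussian $(f(S), f(T))$; by the continuous mapping theorem the triple $(a_n(f(S_n) - E_{\vartheta_0}f(S_n)),\, a_n(f(T_n) - E_{\vartheta_0}f(T_n)),\, a_n(f(T_n - S_n) - E_{\vartheta_0}f(T_n-S_n)))$ converges weakly to $(f(S), f(T), f(T) - f(S))$, which is again centered Gaussian (an affine image of a Gaussian vector). Now I want to conclude that $f(S)$ and $f(T) - f(S)$ are \emph{uncorrelated}, hence—being jointly Gaussian—\emph{independent}. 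Uncorrelatedness of the limit follows from convergence of covariances, which is where hypothesis \eqref{eq:1.7} enters: from $\mathrm{Var}_{\vartheta_0}(a_n f(S_n)) \to \mathrm{Var}(f(S))$ and $\mathrm{Var}_{\vartheta_0}(a_n f(T_n)) \to \mathrm{Var}(f(T))$, together with the identity $\mathrm{Var}_{\vartheta_0}(a_n f(T_n)) = \mathrm{Var}_{\vartheta_0}(a_n f(S_n)) + \mathrm{Var}_{\vartheta_0}(a_n f(T_n - S_n))$ from the last display of Lemma \ref{lem:1.1} (valid since the $n$-th covariance vanishes), we get $\mathrm{Var}_{\vartheta_0}(a_n f(T_n - S_n)) \to \mathrm{Var}(f(T)) - \mathrm{Var}(f(S)) = \mathrm{Var}(f(T) - f(S))$, the last equality because $f(S)$ and $f(T)-f(S)$ are uncorrelated in the limit — wait, that is circular. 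The clean route instead: $\mathrm{Cov}(f(S), f(T)) = \lim \mathrm{Cov}_{\vartheta_0}(a_n f(S_n), a_n f(T_n))$ provided the covariances converge, and convergence of the covariances follows from convergence of the two variances in \eqref{eq:1.7} plus convergence of $\mathrm{Var}(a_n f(S_n + T_n))$ — but $S_n + T_n$ need not be controlled. So the honest argument is: $\mathrm{Cov}_{\vartheta_0}(a_n f(S_n), a_n f(T_n)) = \mathrm{Var}_{\vartheta_0}(a_n f(S_n)) \to \mathrm{Var}(f(S))$, using the vanishing $n$-th covariance $\mathrm{Cov}_{\vartheta_0}(f(S_n), f(T_n)-f(S_n)) = 0$; and separately the joint weak convergence \eqref{eq:1.6} together with uniform integrability supplied by \eqref{eq:1.7} (Gaussian limits plus convergence of second moments of marginals gives uniform integrability of the products $a_n^2 f(S_n) f(T_n)$ via Cauchy–Schwarz) forces $\mathrm{Cov}(f(S), f(T)) = \lim \mathrm{Cov}_{\vartheta_0}(a_n f(S_n), a_n f(T_n)) = \mathrm{Var}(f(S))$. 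Hence $\mathrm{Cov}(f(S), f(T) - f(S)) = \mathrm{Var}(f(S)) - \mathrm{Var}(f(S)) = 0$.

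Given uncorrelatedness, joint normality of $(f(S), f(T)-f(S))$ yields independence, so $\mathcal{L}(f(T)\mid\vartheta_0) = \mathcal{L}(f(S)\mid\vartheta_0) \ast \mathcal{L}(f(T)-f(S)\mid\vartheta_0)$, i.e. the one-dimensional convolution identity \eqref{eq:1.4} holds for every $f \in I$. The final step upgrades this to $W$: since $I$ is a linear subspace, for any finite collection $f_1,\dots,f_k \in I$ and scalars $c_1,\dots,c_k$ the combination $\sum c_j f_j$ lies in $I$, so the scalar identity applies to it; by the Cramér–Wold device the finite-dimensional distributions of the process $(f(T))_{f\in I}$ factor as the convolution of the corresponding finite-dimensional distributions of $(f(S))_{f\in I}$ and $(f(T-S))_{f\in I}$, and since these finite-dimensional laws determine the laws of the $W$-valued variables, \eqref{eq:1.4} holds on $W$. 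The main obstacle is the middle step — justifying that weak convergence plus convergence of the marginal variances \eqref{eq:1.7} implies convergence of the covariances to the covariance of the Gaussian limit; this is a uniform-integrability argument (Cauchy–Schwarz bounds $\{a_n^2 f(S_n) f(T_n)\}$ in $L^1$ uniformly, or one centers first and uses that $L^2$-bounded sequences converging in distribution to a Gaussian converge in second moment), and I would state it as a short lemma or invoke a standard reference rather than belabor it.
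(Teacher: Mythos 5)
Your proposal is correct and follows essentially the paper's own route: Lemma \ref{lem:1.1} supplies $\mathrm{Cov}_{\vartheta_{0}}\left(f(S_{n}),f(T_{n})-f(S_{n})\right)=0$ for each $n$, the second-moment convergence (\ref{eq:1.7}) is what justifies passing this to the Gaussian limit (the paper carries out your uniform-integrability step via Skorokhod's almost-sure representation together with Vitali's theorem, which is the same content as your proposed lemma), and the Cram\'er--Wold device with linearity of $I$ lifts the univariate convolutions (\ref{t2:6}) to the $W$-valued statement (\ref{eq:1.4}). The only blemish is your aside that $T_{n}-S_{n}\in\mathcal{K}_{n}$: this is false in general (the coefficients $1$ and $-1$ do not sum to one, and $E_{\vartheta_{0}}\left(f(T_{n}-S_{n})\right)=0\neq f\left(g_{n}(\vartheta_{0})\right)$ in general), but it is also unnecessary, since only linearity of $f$ and finiteness of the two variances are needed there.
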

\begin{proof}
In a first step the convolution theorem 
\begin{equation}
\mathcal{L}\left(f(T)|\vartheta_{0}\right)=\mathcal{L}\left(f(S)|\vartheta_{0}\right)\ast\mathcal{L}\left(f(T-S)|\vartheta_{0}\right)\label{t2:6}
\end{equation}
is proved for all univariate marginals given by $f\in I$. For this
purpose it is enough to show that 
\begin{equation}
\mathrm{Cov}\left(f(S),f(T-S)\right)=0\label{t2:7}
\end{equation}
since $(f(S),f(T)-f(S))$ is Gaussian. On a new probability space
we may find random variables $X_{n},X,Y_{n}$ and $Y$ with distributions
\[
\mathcal{L}\left(X_{n},Y_{n}\right)=\mathcal{L}\left(a_{n}\left(f(T_{n})-E(f(T_{n}))\right),a_{n}\left(f(S_{n})-E(f(S_{n}))\right)\right)
\]
and 
\[
\mathcal{L}(X,Y)=\mathcal{L}(f(T),f(S))
\]
with $(X_{n},Y_{n})\rightarrow(X,Y)$ almost surely, see Dudley (1989),
p.325. Our assumptions together with (\ref{eq:1.7}) imply by Vitali's
theorem the $L_{2}$-convergence of $X_{n}\rightarrow X$ and $Y_{n}\rightarrow Y$.
Thus 
\[
E((X_{n}-Y_{n})Y_{n})\rightarrow E((X-Y)Y)=\mathrm{Cov(f(S),f(T-S))}
\]
holds. Since $f(S_{n})$ is a minimum variance estimator the covariance
principle of Lemma \ref{lem:1.1} implies 
\[
E((X_{n}-Y_{n})Y_{n})=0
\]
for each $n$ and (\ref{t2:7}) holds. Since $I$ is a linear space
the Cramer Wold device then implies via (\ref{t2:6}) the convolution
equation (\ref{eq:1.4}). \end{proof}
\begin{rem}
Under the assumptions of Theorem \ref{limit_convolution} the univariate
marginals $f(S)$ and $f(T-S)$ are independent for all $f\in I$. \end{rem}
\begin{example}
Let $\mathcal{P}$ be a set of probability measures on a measurable
space $(\Omega,\mathcal{A})$ and let $X$ be a $W$-valued random
variable. Suppose that $g:\;\mathcal{P}\to W$ is the mean functional
in the sense that $f(g(P))=E_{P}(f(X))$ holds for all $f\in I$ and
every $P\in\mathcal{P}$. We like to estimate $g_{n}\left(P^{n}\right):=g(P)$
for $\mathcal{P}_{n}:=\left\{ P^{n}\;:\; P\in\mathcal{P}\right\} $.
The model is given by independent copies $X_{1}$, $X_{2}$, {\dots}
of $X$ with unknown law $\mathcal{L}\left(X_{1},\dots,X_{n}\right)=P^{n}$
for each $n$. Consider the class $\mathcal{K}_{n}$ of $W$-valued
unbiased estimators $T_{n}=T_{n}\left(X_{1},\dots,X_{n}\right)$ with
$E_{P}\left(f\left(T_{n}\right)\right)=f(g(P))$ for all $f\in I$
and all $P\in\mathcal{P}$. Let $P_{0}\in\mathcal{P}$ be fixed. Let
$I$ always denote a linear space of functions. 

(a) Let $\mathcal{P}$ be a convex set such that the marginal distributions
$\left\{ \mathcal{L}(f|P)\;:\; P\in\mathcal{P}\right\} $ are complete
for each $f\in I$. Then the $W$-valued mean $S_{n}=\frac{1}{n}\sum_{i=1}^{n}X_{i}$
is a minimum variance estimator in $\mathcal{K}_{n}$ at $P_{0}$
in the sense of Lemma \ref{lem:1.1}(a). Recall that the order statistics
of $f\left(X_{1}\right),\dots,f\left(X_{n}\right)$ are sufficient
and complete, see Pfanzagl (1994), Sect. 1.5. Thus the theorem of
Lehmann and Scheffe can be applied. 

(b) Let $T_{n}$ be a competing sequence of estimators in $\mathcal{K}_{n}$
which admit a linearization at $P_{0}^{n}$ with 
\begin{equation}
n\mathrm{Var}_{P_{0}^{n}}\left(f\left(T_{n}-\sum_{i=1}^{n}a_{ni}X_{i}\right)\right)\longrightarrow0
\end{equation}
as $n\to\infty$ for all $f$. Assume that $a_{ni}$ are reals with
\begin{equation}
n\sum_{i=1}^{n}a_{ni}^{2}\rightarrow\kappa>0\,\mbox{ and }\, n^{\frac{1}{2}}\max_{1\leq i\leq n}\left|a_{ni}\right|\rightarrow0\:\mbox{as \ensuremath{n\to\infty}.}
\end{equation}
Then together with the Cramer-Wold device the central limit theorem
of Lindeberg and Feller implies that 
\begin{equation}
\left(\sqrt{n}\left(f\left(S_{n}\right)-g\left(P_{0}\right)\right),\sqrt{n}\left(f\left(T_{n}\right)-g\left(P_{0}\right)\right)\right)\longrightarrow\left(Y_{f}^{(S)},Y_{f}^{(T)}\right)
\end{equation}
is weakly convergent under $P_{0}^{n}$ to a centered Gaussian random
variable for all $f\in I$. On the space $\left(\mathbb{R}^{I},\mathcal{B}^{I}\right)$
the convolution theorem 
\begin{equation}
\mathcal{L}\left(\left(Y_{f}^{(T)}\right)_{f\in I}\right)=\mathcal{L}\left(\left(Y_{f}^{(S)}\right)_{f\in I}\right)\ast\mathcal{L}\left(\left(Y_{f}^{(T)}-Y_{f}^{(S)}\right)_{f\in I}\right)
\end{equation}
holds. If in addition $Y_{f}^{(S)}=f(S)$, $Y_{f}^{(T)}=f(T)$ arise
from $W$-valued random variables $S$ and $T$ then the convolution
theorem (\ref{eq:1.4}) holds on $W$. 

(c) Let $\mathcal{P}$ denote all continuous distributions on the
unit interval $[0,1]$. We estimate the distribution function $g(P)=F$
of $P$ in the space of cad lag functions $W=D[0,1]$ on $[0,1]$.
The set $I$ is the linear space generated by the projections $h\mapsto h(t)$
for $t\in[0,1]$. The minimum variance estimator is the empirical
distribution function $S_{n}=\hat{F}_{n}$. The Gaussian limit process
$S$ on $W$ is the transformed Brownian bridge $t\mapsto B_{0}(F(t))$,
where $B_{0}(\cdot)$ denotes a standard Brownian bridge. It can be
shown that under our regularity assumptions the limit process of $t\mapsto\sqrt{n}\left(T_{n}(t)-F(t)\right)$
can be realized by some process $\left(T(t)\right)_{t}$ in $D[0,1]$
and under the conditions of part (b) the convolution theorem holds
for $T$ and $S$ on $D[0,1].$ 
\end{example}

\section{The convolution theorem for limit experiments}

\label{sec:2}

In general there will not be finite sample optimal estimators (or
they turn out to be unknown) which may serve as benchmark for the
underlying sequence $T_{n}$. At this stage an asymptotic solution
is presented within the limit experiment where a convolution theorem
can be presented under fairly general condition.

In a first step linear functionals are estimated where the parameter
space $\Theta\subset V_{0}$ is part of a vector space $V_{0}$. Below
let $V,W$ always denote measurable vector spaces with $\sigma$-fields
$\mathcal{B}(V)$ and $\mathcal{B}(W)$ where $\left(W,\mathcal{B}(W)\right)$
is a standard Borel space (i.e.~$W$ is a measurable set of a polish
space and $\mathcal{B}(W)$ is the Borel $\sigma$-field).

In contrast to the finite dimensional case the parameter space and
the sample spaces do not coincide, see Example \ref{signal_noise_example}
where $V_{0}=L_{2}[0,1]$ is a Hilbert space and $V=C[0,1]$ is the
path space of Brownian motion. However, $V_{0}$ can be embedded in
$V$. Assume that there exists a linear injective function 
\begin{equation}
h:\quad V_{0}\longrightarrow V\label{eq:2.1}
\end{equation}
and let $f:\; V\to W$ be a linear measurable function. Then we are
going to estimate the functional $g$ 
\begin{equation}
g:\quad\Theta\longrightarrow W\quad,\qquad g:=f\circ h_{|\Theta}\quad.\label{eq:2.2}
\end{equation}
Under regularity assumptions the limit experiment of $E_{n}$, see
(\ref{eq:1.5}), has the form 
\begin{equation}
E=\left(V,\mathcal{B}(V),\left\{ P\ast\varepsilon_{h(\vartheta)}\;:\;\vartheta\in\Theta\right\} \right)\label{eq:2.3}
\end{equation}
for some distribution $P$ on $\left(V,\mathcal{B}(V)\right)$, see
Section \ref{sec:4} for the notion of weak convergence of experiments.
After an appropriate normalization the limit distributions of estimators
$T_{n}:V\rightarrow W$ under $P_{n,\vartheta}$ often have the form
$Q\ast\varepsilon_{g(\vartheta)}$ where $Q$ is some distribution
$Q$ on $\left(W,\mathcal{B}(W)\right)$. This leads to the experiment
\begin{equation}
F=\left(W,\mathcal{B}(W),\left\{ Q\ast\varepsilon_{g(\vartheta)}\;:\;\vartheta\in\Theta\right\} \right).\label{eq:2.4}
\end{equation}
Under mild regularity conditions a convolution theorem holds as we
will see below. Typically $E\geq F$ is more informative in the sense
of Le Cam. This means that $F$ is a randomization of $E$. Under
additional assumptions this can be expressed via kernels. Suppose
now that there exists a kernel $K$ 
\begin{equation}
K:\quad V\times\mathcal{B}(W)\longrightarrow[0,1]\quad,\qquad(x,B)\longmapsto K(x,B)\label{eq:2.5}
\end{equation}
with 
\begin{equation}
Q\ast\varepsilon_{g(\vartheta)}(\cdot)=K\left(P\ast\varepsilon_{h(\vartheta)}\right)(\cdot):=\int K(x,\cdot)\, P\ast\varepsilon_{h(\vartheta)}\,(dx)\label{eq:2.6}
\end{equation}
for all $\vartheta\in\Theta$. The construction of kernels is done
in Section \ref{sec:4} for standard Borel spaces.

Let $\Pi_{1}:\; V\times W\to V$ and $\Pi_{2}:\; V\times W\to W$
denote the projections. Then we find a distribution $\mu_{\vartheta}$
on $\left(V\times W,\mathcal{B}(V)\otimes\mathcal{B}(W)\right)$ with
\begin{equation}
\mathcal{L}\left(\Pi_{1}|\mu_{\vartheta}\right)=P\ast\varepsilon_{h(\vartheta)}\quad,\qquad\mathcal{L}\left(\Pi_{2}|\mu_{\vartheta}\right)=Q\ast\varepsilon_{g(\vartheta)}\label{eq:2.7}
\end{equation}
and conditional law $\mathcal{L}\left(\Pi_{2}|\Pi_{1}=x\right)=K(x,\cdot)$.
For $A\in\mathcal{B}(V)$ and $B\in\mathcal{B}(W)$ we may define
\begin{equation}
\mu_{\vartheta}(A\times B)=K\times\left(P\ast\varepsilon_{h(\vartheta)}\right)(A\times B):=\int_{A}K(x,B)\, P\ast\varepsilon_{h(\vartheta)}\, dx.\label{eq:2.8}
\end{equation}

\begin{thm}
\label{extended_convolution_theorem} Let $\Theta_{1}\subset\Theta$
be a subset with $\Theta_{1}+\Theta_{1}\subset\Theta$ such that $\left\{ P\ast\varepsilon_{h(\vartheta)}\;:\;\vartheta\in\Theta_{1}\right\} $
is boundedly complete. Then the following assertions hold. 

(a) The distribution $\nu:=\mathcal{L}\left(\Pi_{2}-f\left(\Pi_{1}\right)|\mu_{\vartheta}\right)$
does not depend on the $\vartheta\in\Theta_{1}$, i.e.~$\Pi_{2}-f\left(\Pi_{1}\right)$
is an ancillary statistic w.r.t.~$\Theta_{1}$. Moreover, $\Pi_{1}$
and $\Pi_{2}-f\left(\Pi_{1}\right)$ are $\mu_{\vartheta}$-independent
for all $\vartheta\in\Theta_{1}$. 

(b) For each $\vartheta\in\Theta$ we have 
\begin{equation}
Q\ast\varepsilon_{g(\vartheta)}=\nu\ast\mathcal{L}(f|P)\ast\varepsilon_{g(\vartheta)}.\label{eq:2.9}
\end{equation}
\end{thm}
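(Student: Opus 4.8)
The plan is to establish part (a) first — the ancillarity and independence statement — and then derive part (b) as an easy consequence by pushing the independence through the marginal distributions. For part (a), I would begin by writing down explicitly the likelihood ratios of the family $\{\mu_\vartheta : \vartheta \in \Theta_1\}$ with respect to a reference measure. The key structural fact is that, by the LAN-type or Gaussian-shift hypothesis underlying (\ref{eq:2.3}), the densities $dP\ast\varepsilon_{h(\vartheta)}/d(P\ast\varepsilon_{h(\vartheta_0)})$ depend on the sample point only through a fixed sufficient statistic, and since $\mu_\vartheta$ is obtained from $P\ast\varepsilon_{h(\vartheta)}$ by the fixed kernel $K$ (which does not depend on $\vartheta$), the statistic $\Pi_1$ remains sufficient for $\{\mu_\vartheta : \vartheta \in \Theta_1\}$ — this is the content of what the introduction calls Neyman's criterion. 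So the first step is: $\Pi_1$ is sufficient for $\{\mu_\vartheta\}$.

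The second step is to show $\Pi_1$ is boundedly complete for $\{\mu_\vartheta : \vartheta \in \Theta_1\}$. But the distribution of $\Pi_1$ under $\mu_\vartheta$ is exactly $P\ast\varepsilon_{h(\vartheta)}$ by (\ref{eq:2.7}), and the hypothesis of the theorem is precisely that $\{P\ast\varepsilon_{h(\vartheta)} : \vartheta \in \Theta_1\}$ is boundedly complete. So this step is immediate. The third step — and I expect this to be the technical heart of the argument — is to verify that $\Pi_2 - f(\Pi_1)$ is ancillary with respect to $\Theta_1$, i.e.~that $\mathcal{L}(\Pi_2 - f(\Pi_1) \mid \mu_\vartheta)$ is the same for every $\vartheta \in \Theta_1$. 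Here one computes: under $\mu_\vartheta$, the pair $(\Pi_1, \Pi_2)$ has $\Pi_1 \sim P\ast\varepsilon_{h(\vartheta)}$ and conditional law $\mathcal{L}(\Pi_2 \mid \Pi_1 = x) = K(x, \cdot)$. Writing $\Pi_1 = \Pi_1' + h(\vartheta)$ where $\Pi_1' \sim P$ (using the shift structure and linearity of $h$), and exploiting an equivariance property of the kernel $K$ with respect to the shift — namely that $K(x + h(\vartheta), B) = K(x, B - g(\vartheta))$, which must follow from (\ref{eq:2.6}) together with $g = f \circ h$ and the fact that $f$ is linear so $f(h(\vartheta)) = g(\vartheta)$ — one gets that the law of $\Pi_2 - f(\Pi_1) = \Pi_2 - f(\Pi_1') - g(\vartheta)$ under $\mu_\vartheta$ coincides with the law of $\Pi_2^{(0)} - f(\Pi_1')$ under $\mu_0$ (with $\Pi_2^{(0)}$ drawn from $K(\Pi_1', \cdot)$). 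The main obstacle is making this equivariance of $K$ rigorous: (\ref{eq:2.6}) only pins down $K$ as a Markov kernel satisfying an integral identity for all $\vartheta \in \Theta$, not pointwise shift-equivariance, so one may need to invoke a suitable almost-everywhere version or the explicit construction of $K$ promised in Section \ref{sec:4} (cross-referenced as (\ref{eq:2.10})--(\ref{eq:2.12}) in the proof sketch of Theorem \ref{preliminary_convolution}). This is exactly the point flagged in the introduction.

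Once $\Pi_1$ is sufficient and boundedly complete, and $A := \Pi_2 - f(\Pi_1)$ is ancillary, Basu's theorem gives that $\Pi_1$ and $\Pi_2 - f(\Pi_1)$ are $\mu_\vartheta$-independent for every $\vartheta \in \Theta_1$. Strictly, Basu's theorem applies on $\Theta_1$; since the independence of a fixed pair of statistics under a fixed measure $\mu_\vartheta$ only involves that single $\vartheta$, and every $\vartheta\in\Theta_1$ is covered, we get independence for all $\vartheta\in\Theta_1$ as claimed. That $\nu := \mathcal{L}(\Pi_2 - f(\Pi_1) \mid \mu_\vartheta)$ is $\vartheta$-free on $\Theta_1$ is just the ancillarity restated, completing part (a).

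For part (b), I would argue as follows. Fix $\vartheta \in \Theta$ (note: all of $\Theta$, not just $\Theta_1$ — but by the hypothesis $\Theta_1 + \Theta_1 \subset \Theta$ and, presumably, translation-invariance of the shift family one reduces to the case handled above, or one extends the density formula to $\stackrel{\circ}{\overline{\Theta}}$ by continuity as remarked in the sketch of Theorem \ref{preliminary_convolution}). Write $\Pi_2 = (\Pi_2 - f(\Pi_1)) + f(\Pi_1)$. By part (a) the two summands are $\mu_\vartheta$-independent (for $\vartheta\in\Theta_1$; the general case by the reduction just mentioned), so the law of $\Pi_2$ is the convolution of their laws: $\mathcal{L}(\Pi_2 \mid \mu_\vartheta) = \nu \ast \mathcal{L}(f(\Pi_1) \mid \mu_\vartheta)$. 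Now $\mathcal{L}(\Pi_2 \mid \mu_\vartheta) = Q \ast \varepsilon_{g(\vartheta)}$ by (\ref{eq:2.7}), while $\mathcal{L}(\Pi_1 \mid \mu_\vartheta) = P \ast \varepsilon_{h(\vartheta)}$, hence $\mathcal{L}(f(\Pi_1) \mid \mu_\vartheta) = \mathcal{L}(f \mid P \ast \varepsilon_{h(\vartheta)}) = \mathcal{L}(f \mid P) \ast \varepsilon_{f(h(\vartheta))} = \mathcal{L}(f \mid P) \ast \varepsilon_{g(\vartheta)}$, using linearity of $f$ so that $f$ pushes the point mass $\varepsilon_{h(\vartheta)}$ to $\varepsilon_{f(h(\vartheta))} = \varepsilon_{g(\vartheta)}$ and commutes with convolution. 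Substituting gives $Q \ast \varepsilon_{g(\vartheta)} = \nu \ast \mathcal{L}(f \mid P) \ast \varepsilon_{g(\vartheta)}$, which is exactly (\ref{eq:2.9}). The only care needed here is the passage from $\Theta_1$ to the full $\Theta$, which I would handle by the shifting argument indicated in Step 2(i) of the proof of Theorem \ref{preliminary_convolution}.
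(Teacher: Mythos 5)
Your overall strategy (sufficiency of $\Pi_{1}$ via the fixed kernel $K$, bounded completeness of $\Pi_{1}$ from the marginal law $P\ast\varepsilon_{h(\vartheta)}$, ancillarity of $\Pi_{2}-f(\Pi_{1})$, Basu, then convolution) is the paper's strategy, and your part (b) is essentially correct. But the step you yourself call the ``main obstacle'' is a genuine gap: you never prove the shift-equivariance of the kernel, and neither of the two escapes you offer would close it. The identity $K(x+h(\vartheta),B)=K(x,B-g(\vartheta))$ does \emph{not} ``follow from (\ref{eq:2.6}) together with linearity of $f$'': (\ref{eq:2.6}) is only an integral identity and is compatible with many kernels; and the construction of $K$ in Section \ref{sec:4} (via conditional distributions of the log-likelihood process) carries no equivariance by itself. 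What you have missed is that the bounded completeness hypothesis, which you use only to feed Basu's theorem, is used a second time --- and more importantly --- precisely to establish the almost-everywhere equivariance, and that this is where the hypothesis $\Theta_{1}+\Theta_{1}\subset\Theta$ enters. Without this argument your ancillarity claim, hence part (a) and with it part (b), is unproven.

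Concretely, the paper argues as follows (this is (\ref{eq:2.10})--(\ref{eq:2.11})): for $\vartheta,\tau\in\Theta_{1}$ and $B\in\mathcal{B}(W)$, since $\tau+\vartheta\in\Theta$ one may apply (\ref{eq:2.6}) at $\tau+\vartheta$ and substitute $x\mapsto x+h(\vartheta)$ to get
\[
Q\ast\varepsilon_{g(\tau)}(B)=Q\ast\varepsilon_{g(\tau+\vartheta)}\bigl(B+g(\vartheta)\bigr)=\int K\bigl(x+h(\vartheta),B+g(\vartheta)\bigr)\,P\ast\varepsilon_{h(\tau)}\,(dx),
\]
while also $Q\ast\varepsilon_{g(\tau)}(B)=\int K(x,B)\,P\ast\varepsilon_{h(\tau)}\,(dx)$. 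Thus the bounded measurable function $x\mapsto K\bigl(x+h(\vartheta),B+g(\vartheta)\bigr)-K(x,B)$ integrates to zero under every $P\ast\varepsilon_{h(\tau)}$, $\tau\in\Theta_{1}$, and bounded completeness forces it to vanish a.e.; since $W$ is standard Borel, $\mathcal{B}(W)$ is countably generated, so the exceptional null set can be chosen uniformly in $B$, yielding $K(x,\cdot)=K\bigl(x+h(\vartheta),\cdot\bigr)\ast\varepsilon_{-g(\vartheta)}$ a.e. From this your computation of the law of $\Pi_{2}-f(\Pi_{1})$ (the paper's (\ref{eq:2.12})) goes through and gives ancillarity on $\Theta_{1}$, and Basu finishes (a). Two smaller remarks: the paper first reduces to $h=\mathrm{id}$, $g=f$, which you may also do to lighten notation; and the passage in (b) from one $\vartheta$ to all of $\Theta$ needs no continuity, density or translation-invariance argument --- convolving both sides of (\ref{eq:2.9}) with $\varepsilon_{-g(\vartheta)}$ shows that equality for a single $\vartheta$ is equivalent to $Q=\nu\ast\mathcal{L}(f|P)$, which then gives (\ref{eq:2.9}) for every $\vartheta\in\Theta$.
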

\begin{rem}
Under the present assumptions the distribution of the randomized estimator
$x\mapsto K(x,\cdot)$ of the functional $g$ of (\ref{eq:2.2}) is
more spread out than the distribution of the point estimator $x\mapsto f(x)$.
The kernel $K$ of (\ref{eq:2.5}) may be chosen to be the convolution
kernel $K(x,\cdot)=\nu\ast\varepsilon_{f(x)}$ and it is unique $P\ast\varepsilon_{h(\vartheta)}$
a.e. for all $\vartheta\in\Theta_{1}$. \end{rem}
\begin{proof}
Without restrictions we may assume that $V_{0}=V$ and $g=f$ hold
with the identity $h$ on $V_{0}$. Choose $\vartheta,\tau\in\Theta_{1}$.
In a first step we will prove that 
\begin{equation}
K(x,\cdot)=K(x+\vartheta,\cdot)\ast\varepsilon_{-f(\vartheta)}\quad,\qquad P\ast\varepsilon_{\tau}\quad\mbox{a.e.}\label{eq:2.10}
\end{equation}
holds. To see this we consider $B\in\mathcal{B}(W)$. By (\ref{eq:2.6})
we obtain 
\begin{eqnarray}
Q\ast\varepsilon_{f(\tau)}(B) & = & Q\ast\varepsilon_{f(\tau+\vartheta)}(B+f(\vartheta))\label{eq:2.11}\\
 & = & \int K(x,B+f(\vartheta)\, P\ast\varepsilon_{\tau+\vartheta}\,(dx)\nonumber \\
 & = & \int K(x+\vartheta,B+f(\vartheta))\, P\ast\varepsilon_{\tau}\,(dx).\nonumber 
\end{eqnarray}
On the other hand $Q\ast\varepsilon_{f(\tau)}(B)=\int K(x,B)\, P\ast\varepsilon_{\tau}\,(dx)$
holds for all $\tau\in\Theta_{1}$. Bounded completness now implies
equality in (\ref{eq:2.10}) for the restriction on countable families
of $B$'s. Since $\mathcal{B}(W)$ is countably generated the result
holds for the whole $\sigma$-algebra.

Fix now some $\tau\in\Theta_{1}$. Then it is easy to see that the
projection $\Pi_{1}$ is sufficient and boundedly complete w.r.t.~to
the model $\left\{ \mu_{\vartheta+\tau}\;:\;\vartheta\in\Theta_{1}\right\} $
of (\ref{eq:2.7}) on $V\times W$.

Next we will show that $\Pi_{2}-f\left(\Pi_{1}\right)$ is an ancillary
statistic w.r.t.~$\Theta_{1}$. Consider $A\in\mathcal{B}(V)$, $B\in\mathcal{B}(W)$
and an arbitrary $\vartheta\in\Theta_{1}$. Taking (\ref{eq:2.10})
into account we have 
\begin{eqnarray}
 &  & \mu_{\tau}\left(\left\{ \Pi_{1}+\vartheta\in A,\Pi_{2}+f(\vartheta)\in B\right\} \right)\label{eq:2.12}\\
 & = & \int1_{A}(x+\vartheta)\, K(x,B-f(\vartheta))\, P\ast\varepsilon_{\tau}\,(dx)\nonumber \\
 & = & \int1_{A}(x+\vartheta)\, K(x+\vartheta,B)\, P\ast\varepsilon_{\tau}\,(dx)\nonumber \\
 & = & \mu_{\tau+\vartheta}\left(\left\{ \Pi_{1}\in A,\Pi_{2}\in B\right\} \right).\nonumber 
\end{eqnarray}
If we now write $\Pi_{2}-f(\Pi_{1})=\left(\Pi_{2}+f(\vartheta)\right)-f\left(\Pi_{1}+\vartheta\right)$
we see that 
\[
\mathcal{L}\left(\Pi_{2}-f\left(\Pi_{1}\right)|\mu_{\tau+\vartheta}\right)=\mathcal{L}\left(\Pi_{2}-f\left(\Pi_{1}\right)|\mu_{\tau}\right)
\]
does not depend on $\vartheta\in\Theta_{1}$. Basu's theorem, see
Pfanzagl (1994), p.45, then implies the independence of the sufficient
and boundedly complete statistic $\Pi_{1}$ and the ancillary statistic
$\Pi_{2}-f\left(\Pi_{1}\right)$.

Assertion (b) is then obvious since (\ref{eq:2.7}) and $\Pi_{2}=\left(\Pi_{2}-f\left(\Pi_{1}\right)\right)+f\left(\Pi_{1}\right)$
holds. Observe that (\ref{eq:2.9}) holds for all $\vartheta\in\Theta$
when we have equality for at least one $\vartheta$. 
\end{proof}
For the rest of this section we like to study applications of Theorem
\ref{extended_convolution_theorem}. First we give sufficient conditions
for bounded completeness of experiments on sample spaces with infinite
dimension.
\begin{lem}
\label{lem:2.3} Let $\mathcal{A}_{n}\subset\mathcal{A}$ denote an
increasing sequence of $\sigma$-fields with $\mathcal{A}_{0}:=\sigma\left(\mathcal{A}_{n}\;:\; n\in\mathbb{N}\right)$.
Assume that there exists an increasing sequence of $\Theta_{n}\subset\Theta$
of parameter sets such that the $\sigma$-fields $\mathcal{A}_{n}$
are sufficient and boundedly complete w.r.t. $\left(\Omega,\mathcal{A},\left\{ P_{\vartheta}\;:\;\vartheta\in\Theta_{n}\right\} \right)$
for each $n$. If $\left\{ P_{\vartheta}\;:\;\vartheta\in\Theta\right\} \ll\left\{ P_{\vartheta}\;:\;\vartheta\in\bigcup_{n=1}^{\infty}\Theta_{n}\right\} $
is dominated then $\mathcal{A}_{0}$ is boundedly complete w.r.t.
$\left\{ P_{\vartheta}\;:\;\vartheta\in\Theta\right\} $. \end{lem}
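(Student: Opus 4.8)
The plan is to reduce bounded completeness on $\mathcal{A}_0$ to the bounded completeness that is already assumed on each $\mathcal{A}_n$, using a martingale convergence argument to pass to the limit. Suppose $\varphi:\Omega\to\mathbb{R}$ is $\mathcal{A}_0$-measurable and bounded with $E_\vartheta(\varphi)=0$ for all $\vartheta\in\Theta$; the goal is to show $\varphi=0$ almost surely with respect to $\{P_\vartheta:\vartheta\in\Theta\}$. First I would fix a dominating probability measure. Since $\{P_\vartheta:\vartheta\in\Theta\}\ll\{P_\vartheta:\vartheta\in\bigcup_n\Theta_n\}$ is dominated, there is a countable subfamily whose convex combination $\mu$ dominates $\{P_\vartheta:\vartheta\in\Theta\}$, and (enlarging if necessary and using that the $\Theta_n$ are increasing) we may take $\mu=\sum_k c_k P_{\vartheta_k}$ with all $\vartheta_k\in\bigcup_n\Theta_n$, hence each $\vartheta_k\in\Theta_{n_k}$ for some $n_k$. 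It suffices to prove $\varphi=0$ $\mu$-a.e.

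Next I would introduce the conditional expectations $\varphi_n:=E_\mu(\varphi\mid\mathcal{A}_n)$. Since $\mathcal{A}_n$ is sufficient for $\{P_\vartheta:\vartheta\in\Theta_n\}$, and $\mathcal{A}_n\subset\mathcal{A}_{n+1}\subset\cdots$ with $\mathcal{A}_n$ sufficient also for the larger families $\{P_\vartheta:\vartheta\in\Theta_m\}$ when $m\le n$ — the key point is to check that $\varphi_n$ serves as a version of $E_\vartheta(\varphi\mid\mathcal{A}_n)$ simultaneously for all $\vartheta\in\Theta_n$. This follows because $\varphi$ is $\mathcal{A}_0$-measurable so it is a limit (in $L_1(\mu)$, hence $L_1(P_\vartheta)$ for $\vartheta\le$ the dominating index) of $\mathcal{A}_{n(j)}$-measurable functions, and the sufficiency of $\mathcal{A}_n$ together with the tower property transfers conditioning under $\mu$ to conditioning under $P_\vartheta$. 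Then for $\vartheta\in\Theta_n$ one computes $E_\vartheta(\varphi_n)=E_\vartheta(E_\vartheta(\varphi\mid\mathcal{A}_n))=E_\vartheta(\varphi)=0$. Since $\varphi_n$ is $\mathcal{A}_n$-measurable, bounded, and has zero expectation under every $P_\vartheta$ with $\vartheta\in\Theta_n$, the assumed bounded completeness of $\mathcal{A}_n$ w.r.t.~$\{P_\vartheta:\vartheta\in\Theta_n\}$ forces $\varphi_n=0$ $P_\vartheta$-a.e.~for all $\vartheta\in\Theta_n$.

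Finally I would assemble the pieces by martingale convergence. The sequence $(\varphi_n,\mathcal{A}_n)$ is a bounded martingale under $\mu$, so $\varphi_n\to E_\mu(\varphi\mid\mathcal{A}_0)=\varphi$ $\mu$-a.e.~(and in $L_1(\mu)$) by Lévy's upward theorem, using $\mathcal{A}_0=\sigma(\bigcup_n\mathcal{A}_n)$. Because $\mu=\sum_k c_k P_{\vartheta_k}$ with $\vartheta_k\in\Theta_{n_k}$ and, by the previous step, $\varphi_n=0$ $P_{\vartheta_k}$-a.e.~as soon as $n\ge n_k$, we get $\varphi_n\to 0$ $P_{\vartheta_k}$-a.e.~for each $k$, hence $\varphi_n\to 0$ $\mu$-a.e. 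Comparing the two limits gives $\varphi=0$ $\mu$-a.e., and since $\mu$ dominates $\{P_\vartheta:\vartheta\in\Theta\}$ this yields $\varphi=0$ $P_\vartheta$-a.e.~for all $\vartheta\in\Theta$, which is the claimed bounded completeness of $\mathcal{A}_0$.

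The main obstacle is the bookkeeping in the middle step: one must verify carefully that a single $\mathcal{A}_n$-measurable version $\varphi_n$ of the conditional expectation works simultaneously for the whole family $\{P_\vartheta:\vartheta\in\Theta_n\}$ (this is where sufficiency of $\mathcal{A}_n$ is essential and where one must be careful that $\varphi$ being merely $\mathcal{A}_0$-measurable, not $\mathcal{A}_n$-measurable, does not break the argument) and that the a.e.~statements under the various $P_{\vartheta_k}$ can be glued into an a.e.~statement under $\mu$. The martingale convergence and the final domination argument are routine once this is in place.
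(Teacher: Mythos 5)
Your overall architecture follows the intended route (a common conditional expectation at level $\mathcal{A}_n$, bounded completeness of $\mathcal{A}_n$, martingale convergence to pass to $\mathcal{A}_0$, domination at the end), but the middle step has a genuine gap. You set $\varphi_n:=E_\mu(\varphi\mid\mathcal{A}_n)$ for one global mixture $\mu=\sum_k c_k P_{\vartheta_k}$ with $\vartheta_k\in\Theta_{n_k}$, and you claim that $\varphi_n$ is simultaneously a version of $E_{P_\vartheta}(\varphi\mid\mathcal{A}_n)$ for all $\vartheta\in\Theta_n$. This does not follow from the hypotheses and is false in general: sufficiency of $\mathcal{A}_n$ is assumed only for $\left\{ P_\vartheta:\vartheta\in\Theta_n\right\}$, whereas $\mu$ contains components $P_{\vartheta_k}$ with $n_k>n$, i.e.\ measures outside that family; $E_\mu(\cdot\mid\mathcal{A}_n)$ is a density-weighted average of the $E_{P_{\vartheta_k}}(\cdot\mid\mathcal{A}_n)$ over all $k$, and the foreign components pull it away from the common version on sets of positive $P_\vartheta$-measure. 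A toy illustration of the mechanism: let $\sigma(X)$ be (trivially) sufficient for the one-point family $\{P\}$ under which $X,Y$ are independent uniform on $\{1,2\}$, and let $P'$ make $Y=X$; for $\mu=\frac{1}{2}(P+P')$ one gets $E_\mu(1_{\{Y=1\}}\mid X=1)=\frac{3}{4}\neq\frac{1}{2}=E_P(1_{\{Y=1\}}\mid X=1)$ on a set of positive $P$-measure. Your suggested justification (approximating $\varphi$ by $\mathcal{A}_{n(j)}$-measurable functions and using the tower property) does not repair this, since the conditional expectation of an $\mathcal{A}_{n(j)}$-measurable function given $\mathcal{A}_n$ is itself measure-dependent. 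Without the common-version property you cannot conclude $E_\vartheta(\varphi_n)=E_\vartheta(\varphi)=0$ for all $\vartheta\in\Theta_n$, so the appeal to bounded completeness of $\mathcal{A}_n$, and hence the key fact $\varphi_n=0$ $P_{\vartheta_k}$-a.e.\ for $n\ge n_k$, is unsupported.

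The repair is to avoid the mixture at this stage, which is exactly what the paper does: take $f_n$ to be the common version of $E_{P_\vartheta}(f\mid\mathcal{A}_n)$, $\vartheta\in\Theta_n$, supplied directly by the sufficiency assumption; bounded completeness of $\mathcal{A}_n$ then gives $f_n=0$ $P_\vartheta$-a.e.\ for all $\vartheta\in\Theta_n$; for fixed $\vartheta_0\in\Theta_m$ the sequence $(f_n)_{n\ge m}$ is a bounded $P_{\vartheta_0}$-martingale, so L\'evy's theorem yields $f_n\to f$ $P_{\vartheta_0}$-a.e.\ and hence $f=0$ $P_{\vartheta_0}$-a.e.; finally the domination hypothesis is used only to extend the conclusion from $\bigcup_n\Theta_n$ to all of $\Theta$. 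If you replace your $\varphi_n$ by this sufficiency-supplied version and run the martingale convergence under each fixed $P_{\vartheta_0}$ separately (the mixture $\mu$ is then not needed), your argument becomes correct.
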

\begin{proof}
It is sufficient to prove boundedly completness for $\Theta_{0}=\bigcup_{n=1}^{\infty}\Theta_{n}$.
Consider a bounded $\mathcal{A}_{0}$-measurable function $f:\;\Omega\to\mathbb{R}$
with $E_{\vartheta}(f)=0$ for all $\vartheta\in\Theta_{0}$. By the
assumption of sufficiency there exists for each $n\in\mathbb{N}$
a version of the conditional expectation 
\[
f_{n}:=E_{\centerdot}(f|\mathcal{A}_{n})=E_{P_{\vartheta}}(f|\mathcal{A}_{n})\; P_{\vartheta}\:\mbox{a.e.}
\]
which is independent of $\vartheta\in\Theta_{n}$. Thus $\int f_{n}dP_{\vartheta}=0$
holds for all $\vartheta\in\Theta_{n}$. We conclude $f_{n}=0$ $P_{\vartheta}$
a.e. for all $\vartheta\in\Theta_{n}$, since the $\sigma$-field
$\mathcal{A}_{n}$ is boundedly complete w.r.t. $\left\lbrace P_{\vartheta}:\vartheta\in\Theta_{n}\right\rbrace $.
For fixed $\vartheta_{0}\in\Theta_{m}$ then $f_{n}$ vanishes $P_{\vartheta_{0}}$
a.e. for all $n\geq m$. On the other hand the martingale convergence
theorem implies $f_{n}\rightarrow f$ $P_{\vartheta_{0}}$ a.e. and
$f$ vanishes $P_{\vartheta_{0}}$ a.e.\end{proof}
\begin{rem}
\label{rem:3.4}

(a) Let $P\ll\lambda^{d}$ be absolutely continuous on $\mathbb{R}^{d}$
without zeros $\widehat{P}\neq0$ of the Fourier transform. If the
parameter space $\Theta\subset\mathbb{R}^{d}$ is dense in $\mathbb{R}^{d}$
then the shift family $\left\lbrace P\ast\varepsilon_{\vartheta}:\vartheta\in\Theta\right\rbrace $
is boundedly complete. Recall from Hewitt and Ross (1963), Sect. 19
and 20, that $\vartheta\mapsto\frac{dP\ast\varepsilon_{\vartheta}}{d\lambda^{d}}$
is continuous in $L_{1}\left(\lambda^{d}\right).$ Since $\Theta$
is dense our experiment is boundedly complete iff the full shift experiment
with $\vartheta\in\mathbb{R}^{d}$ is boundedly complete. Under $\widehat{P}\neq0$
the Wiener closure theorem implies the result, see Rudin (1991), Sect.
9.1--9.8. 

(b) Boll's convolution theorem can easily be extended to arbitrary
distributions $P$ on $\mathbb{R}^{d}$ with $\widehat{P}\neq0$ which
may not be absolutely continuous. The equality $Q\ast\varepsilon_{\vartheta}=K(P\ast\varepsilon_{\vartheta})$
for all $\vartheta\in\mathbb{R}^{d}$ leads to $Q\ast N\ast\varepsilon_{\vartheta}=K(P\ast N\ast\varepsilon_{\vartheta})$
for the standard normal distribution $N$ on $\mathbb{R}^{d}$. Then
$\widehat{Q}\widehat{N}=\widehat{\nu}\widehat{P}\widehat{N}$ implies
the convolution theorem.

(c) Suppose that $W$ is a topological vector space and let the shift
family $y\mapsto Q\ast\varepsilon_{y}$ be weakly continuous on the
closure $\overline{g(\Theta)}$ in $W$. Then the kernel representation
(\ref{eq:2.6}) can be extended to enlarged parameter sets $\widetilde{\Theta},\Theta\subset\widetilde{\Theta}\subset V_{0}$.
A sufficient condition is 
\begin{equation}
g(\widetilde{\Theta})\subset\overline{g(\Theta)}.\label{suff_condition}
\end{equation}
The proof follows by continuity arguments. Define a new family $Q'_{\vartheta}:=K(P\ast\varepsilon_{h(\vartheta)})$
for all $\vartheta\in\widetilde{\Theta}$ on $W$. Since $Q'_{\vartheta}=Q\ast\varepsilon_{g(\vartheta)}$
holds for a dense set of parameters $g(\vartheta),\vartheta\in\Theta,$
the distributions $\left(Q'_{\vartheta}\right)_{\vartheta\in\widetilde{\Theta}}$
must belong to a shift family. \end{rem}
\begin{example}
\label{ex:2.4} Let $\left(\Omega,\mathcal{A},\left\lbrace P_{h}:h\in H\right\rbrace \right)$
a Gaussian shift experiment with likelihood ratio 
\begin{equation}
\frac{dP_{h}}{dP_{0}}=\exp\left(L(h)-\frac{{\Vert h\Vert}^{2}}{2}\right),\; h\in H\label{g1}
\end{equation}
see Strasser (1985), chap. 11, where $(H,\langle\cdot,\cdot\rangle)$
denotes a separable real Hilbert space and $h\mapsto L(h)$ is a centered
linear Gaussian process w.r.t. $P_{0}$ and covariance $\mbox{Cov}_{P_{0}}(L(h),L(g))=\langle h,g\rangle$
for all $h,g\in H$. Let $(g_{i})_{i\in N}$, $N\subset\mathbb{N},$
denote a countable family in $H$ and let $\mathcal{A}_{0}=\sigma(L(g_{i}):i\in N)$
denote the induced $\sigma$-field on $\Omega$. Then we have: 

(a) If $\Theta\subset H$ is a subset of the sub-Hilbert space generated
by $(g_{i})_{i\in N}$, then $\mathcal{A}_{0}$ is sufficient for
the experiment $E=\left(\Omega,\mathcal{A},\left\lbrace P_{h}:h\in\Theta\right\rbrace \right)$. 

(b) Consider for each $i\in N$ a subset $\Pi_{i}\subset\mathbb{R}$
with $\stackrel{\circ}{\overline{\Pi}}_{i}\neq\emptyset$. Suppose
that the parameter space is rich enough in the sense that 
\begin{equation}
\left\lbrace \sum_{i\in J}\alpha_{i}g_{i}:\alpha_{i}\in\Pi_{i},\: J\subset N,\: J\:\mbox{finite}\right\rbrace \subset\Theta\label{g2}
\end{equation}
holds. Then $\mathcal{A}_{0}$ is boundedly complete for $E$. 

In order to give a proof of (a) observe that for each parameter $h\in H$
with Hilbert space representation $h=\sum_{i\in N}\alpha_{i}g_{i}$
we may choose densities $L(h)=\sum_{i\in N}\alpha_{i}L(g_{i})$ such
that (\ref{g1}) becomes $\mathcal{A}_{0}$-measurable. Thus it is
easy to see that all densities (\ref{g1}) are $\mathcal{A}_{0}$-measurable
for $h\in\Theta$. \\
 Part (b) follows from Lemma \ref{lem:2.3}. Without restriction we
may assume that the elements $(g_{i})_{i\in N}$ are linearly independent
in $H$. Otherwise we may cancel some members and $\mathcal{A}_{0}$
remains unchanged. For finite $J\subset N$ choose $\mathcal{A}_{J}=\sigma\left(g_{i}:i\in J\right)$
and $\Theta_{J}:=\left\lbrace \sum_{i\in J}\alpha_{i}g_{i}:\alpha_{i}\in\Pi_{i}\right\rbrace .$
Obviously, $\mathcal{A}_{J}$ is sufficient w.r.t. $\left\lbrace P_{h}:h\in\Theta_{J}\right\rbrace $
by a proper choice of the densities $L(h)$. The bounded completeness
of $\mathcal{A}_{J}$ can be proved as follows. The experiment 
\[
\left(\mathbb{R}^{J},\mathcal{B}^{J},\left\lbrace \mathcal{L}\left(\left(L(g_{i})\right)_{i\in J}|h\right):h\in\Theta_{J}\right\rbrace \right)
\]
is an exponential family of normal distributions with non-singular
covariance matrix on $\mathbb{R}^{J}$. The family is boundedly complete
since $\times_{i\in J}\overline{\Pi}_{i}$ has an inner points. 
\end{example}
\begin{example}\label{examp:3:6}

(a) Let $P_{\vartheta}:=\mathcal{L}((X_{1}(t))_{0\leq t\leq1}|\vartheta)$
denote the distribution of the signal plus noise model (\ref{signal_noise})
on $C[0,1]$ and consider the Hilbert space $L_{2}[0,1]$ as parameter
space. Via the injection $\vartheta\mapsto h(\vartheta)$ given by
(\ref{signal}) the family is a shift family 
\begin{equation}
P_{\vartheta}=P_{0}\ast\varepsilon_{h(\vartheta)},\:\vartheta\in L_{2}[0,1],\label{sn1}
\end{equation}
where $P_{0}$ denotes the Wiener measure on $C[0,1]$. The present
family (\ref{sn1}) is a Gaussian shift experiment where the densities
(\ref{g1}) are determined by the stochastic integral 
\begin{equation}
L(\vartheta)=\int_{0}^{1}\vartheta(s)\, B_{1}(ds)\label{sn2}
\end{equation}
w.r.t. Brownian motion. Observe, that the form of the densities follows
from the well-known Girsanov formula. Example \ref{ex:2.4} provides
conditions for the completeness of $\mathcal{B}(C[0,1])$ w.r.t. $E=\left\lbrace P_{\vartheta}:\vartheta\in\Theta\right\rbrace $.
We will only give a simple example. Let 
\begin{equation}
\Theta_{1}\:\mbox{denote the polynomials on}\:[0,1]\label{sn3}
\end{equation}
with non-negative rational coefficients. If 
\begin{equation}
\Theta_{1}\subset\Theta\label{sn4}
\end{equation}
holds, then $\mathcal{B}(C[0,1])$ is boundedly complete for $E$
and the assumptions of Theorem \ref{extended_convolution_theorem}
are fullfilled since $\Theta_{1}+\Theta_{1}=\Theta_{1}$ holds. Observe
that $\Theta_{1}$ generates the whole Hilbert space $L_{2}[0,1]$
and thus the projections $f\mapsto f(t)$ on $C[0,1]$ are $P_{0}$
a.e. measurable w.r.t. to $\mathcal{A}_{0}$. To see this consider
$\vartheta=1_{[0,t]}$.

Let now $T:\; C[0,1]\to W$ be an estimator of a function $g:\;\Theta\to W$,
$g=f\circ h_{|\Theta}$ defined in (\ref{eq:2.2}) which factorizes
via our signal $h$. Suppose that $T$ is an equivariant estimator
in law, i.e. 
\[
\mathcal{L}(T(X(\cdot))|\vartheta)=\mathcal{L}(T(B(\cdot)))\ast\varepsilon_{g(\vartheta)}=:Q\ast\epsilon_{g(\vartheta)}
\]
for each $\vartheta\in\Theta$. Then the estimator $T_{0}:=f(X(\cdot))$
is superior in the sense that $Q=\nu\ast\mathcal{L}\left(T_{0}|\vartheta=0\right)$
holds. 

(b) As application the convolution theorem (\ref{signal_noise_convolution})
will be established for arbitrary sample size $n$. Assume for instance
that $\vartheta_{0}+\Theta_{1}\subset\Theta$ holds where $\Theta_{1}$
denotes again the polynomials of (\ref{sn3}). Due to the translation
invariance of the problem we may assume that $\vartheta_{0}=0$ holds.
In a first step the sample size $n$ will be reduced. By (\ref{sn2})
the product densities are given by the stochastic integral w.r.t.
to the processes (\ref{signal_noise}) 
\begin{equation}
\frac{dP_{\vartheta}^{n}}{dP_{0}^{n}}=\exp\left(\int_{0}^{1}\vartheta(s)\left(\sum_{i=1}^{n}X_{i}\right)\, ds+n\frac{\Vert\vartheta\Vert^{2}}{2}\right).\label{sn5}
\end{equation}
Thus $\sqrt{n}S$ is sufficient where $S$ is the mean statistic (\ref{natural_estimator}).
If we turn to the local parameterization (\ref{another_local_parameter})
we have 
\begin{equation}
\mathcal{L}\left(\sqrt{n}S\left|P_{\eta/{\sqrt{n}}}^{n}\right.\right)=\mathcal{L}(X_{1}(\cdot)|P_{\eta})\label{sn6}
\end{equation}
on $C[0,1]$. Since $\sqrt{n}S$ is sufficient there exists a kernel
\begin{equation}
C:C[0,1]\times\mathcal{B}\left(C[0,1]^{n}\right)\rightarrow[0,1]\label{sn7}
\end{equation}
which reproduces the product measures from the image distributions
\begin{equation}
P_{\eta/\sqrt{n}}^{n}(\cdot)=\int C(x,\cdot)\, dP_{\eta}(x),\label{sn8}
\end{equation}
see Pfanzagl (1994), Prop. 1.3.1 for instance. For $g=h$ we now find
the kernel required in (\ref{eq:2.6}), namely 
\begin{equation}
Q_{h(\eta)}(\cdot):=\mathcal{L}\left(T\left|P_{\eta/\sqrt{n}}^{n}\right.\right)(\cdot)=\int C(x,T^{-1}(\cdot))\, P_{0}\ast\varepsilon_{h(\vartheta)}(dx)\label{sn9}
\end{equation}
Thus Theorem \ref{extended_convolution_theorem} implies the convolution
theorem (\ref{signal_noise_convolution}) and $S$ is the best equivariant
estimator, see Pötzelberger et al. (2000) for more details about equivariant
estimation. \end{example}

\begin{example}\label{example_levy_pro} The treatment of the Levy
processes of Example \ref{levy_pre_version} is based on a completeness
result about countable product experiments 
\begin{equation}
\left(\times_{i=1}^{\infty}\Omega_{i},\otimes_{i=1}^{\infty}\mathcal{A}_{i},\left\lbrace \otimes_{i=1}^{\infty}P_{s_{i}}^{(i)}:(s_{i})_{i}\in\Theta\right\rbrace \right)\label{levy1}
\end{equation}
where $\Theta\subset\times_{i=1}^{\infty}\Theta_{i}$ is a suitable
parameter space. Suppose that each factor $\left(\Omega_{i},\mathcal{A}_{i},\left\lbrace P_{s_{i}}^{(i)}:s_{i}\in\Theta_{i}\right\rbrace \right)$
is boundedly complete. Suppose that $s_{0}=\left(s_{i}^{0}\right)_{i\in\mathbb{N}}\in\Theta$
denotes a fixed parameter. Then Lemma \ref{lem:2.3} immediately implies
that (\ref{levy1}) is boundedly complete for 
\begin{equation}
\Theta=\left\lbrace \left(s_{i}\right)_{i\in\mathbb{N}}\in\times_{i=1}^{\infty}\Theta_{i}:s_{i}=s_{i}^{(0)}\:\mbox{finally}\:\right\rbrace ,\label{levy2}
\end{equation}
see also Landers and Rogge (1976) for related argument for the completeness
of product experiments. Let us now turn to Example \ref{levy_pre_version}.
Define next $t_{0}=0$, $\vartheta_{0}^{(0)}=0$ and $\vartheta_{0}=0$.
Then the transformation 
\begin{equation}
X\rightarrow\left(Y_{i}+s_{i}\right)_{i\in\mathbb{N}}\label{levy3}
\end{equation}
given by $Y_{i}:=Z_{t_{i}}-Z_{t_{i-1}}$, $s_{i}=\vartheta_{i}-\vartheta_{i-1}$,
denotes a one to one transformation of the shifted Levy process (\ref{levy_observations})
to the independent increment processes of the right hand side of (\ref{levy3}).
Observe that by Wiener's closure theorem the family $\left(\mathcal{L}\left(Y_{i}+s_{i}\right)\right)_{s_{i}\in\mathbb{Q}}$
is boundedly complete for each $i$ since the Fourier transforms of
absolutely continuous distributions do not vanish, see Remark \ref{rem:3.4}.
Thus the Levy process model is boundedly complete iff the product
model of the right hand side of (\ref{levy3}) is. Our results above
imply that this results is true if we take $s_{i}^{(0)}:=\vartheta_{i}^{(0)}-\vartheta_{i-1}^{(0)}$
and if we restrict ourselves to shifts $\left(\vartheta_{i}\right)_{i\in\mathbb{N}}$
of the Levy process with $\vartheta_{i}=\vartheta_{i}^{(0)}$ finally.
The convolution theorem (\ref{ct:3}) follows again from Theorem \ref{extended_convolution_theorem}.
Note that we may assume without restrictions (after applying a shift
to $Y_{i}$) that $\vartheta^{0}=0\in\mathbb{R}^{\mathbb{N}}$ holds.
Then the choice $\Theta_{1}=\left\lbrace \left(\vartheta_{i}\right)_{i\in\mathbb{N}}\in\mathbb{Q}^{\mathbb{N}}:\vartheta_{i}=0\:\mbox{finally}\:\right\rbrace $
is appropriate. \end{example}
 
\begin{rem} The present convolution
theorem can be extended to differentiable statistical functionals
$\kappa:\Theta\rightarrow\mathbb{R}$ in the sense of van der Vaart
(1988,1989,1991). For LAN families the linearization of $\kappa$
at $\vartheta_{0}$ via the canonical gradient implies the result.
\end{rem} 

\begin{example}\label{ex:3:8} This example shows that
generally the convolution factors are not unique on $\mathbb{R}$
when the assumptions of Wiener's closure theorem do not hold. In particular
we present different probability measures $\mu,\nu$ such that the
equality $\mu\ast\eta=\nu\ast\eta$ holds for each probability measure
$\eta$ with Fourier transform vanishing outside of the interval $[-1,1]$.
Let $\nu$ be a probability measure on $(\mathbb{R},\mathcal{B})$
with Lebesgue density $f:x\mapsto\frac{2\sin^{2}\left(\frac{x}{2}\right)}{\pi x^{2}}.$
It is easy to show that $\nu$ possesses the Fourier transform $\widehat{\nu}(t)=(1-|t|)\mathbf{1}_{[-1,1]}(t),$
see Gnedenko (1968), p. 236. Next we regard the probability measure
\[
\mu_{0}=\frac{1}{2}\varepsilon_{0}+\sum_{k=1}^{\infty}\frac{2}{\pi^{2}(2k+1)^{2}}\left(\varepsilon_{4k+2}+\varepsilon_{-(4k+2)}\right),
\]
which $\pi$-periodical Fourier transform is $\widehat{\mu_{0}}(t)=1-\frac{2}{\pi}|t|$
for $|t|\leq\frac{\pi}{2}$, see Renyi (1966), p. 271. The probability
measure $\mu$ is defined as distribution of $\mu_{0}$ which is scaled
with the factor $\frac{\pi}{2}$. The Fourier transform of $\mu$
is then $\widehat{\mu}(t)=1-|t|$ for $|t|\leq1$. Thus we obtain
$\mu\ast\eta=\nu\ast\eta$ for each probability measure $\eta$, which
Fourier transform have a support inside of the interval $[-1,1]$.

\end{example}

\section{Convergence to limit experiments}

\label{sec:4}

In this section it is pointed out how the convergence of statistics
and the convergence of experiments are related. We present an elementary
proof of the main Theorem \ref{main_convolution_theorem} for distributions
$Q_{\vartheta}$ on standard Borel spaces. This theorem is a special
case of more general results of Le Cam, see Le Cam and Yang (2000),
Le Cam (1994), Th. 1 and also Strasser (1985) or Torgersen (1991).
Let $E_{n}=\left(\Omega_{n},\mathcal{A}_{n},\left\lbrace P_{n,\vartheta}:\vartheta\in\Theta_{n}\right\rbrace \right)$
denote as in (\ref{eq:1.5}) a statistical experiment and let $E=\left(\Omega,\mathcal{A},\left\lbrace P_{\vartheta}:\vartheta\in\Theta\right\rbrace \right)$
be another experiment with $\Theta_{n}\uparrow\Theta$. Recall from
Strasser (1985), Sect. 60, that $E_{n}$ is said to be weakly convergent
to $E$, if all finite dimensional marginals distributions of the
loglikelihood processes 
\[
\mathcal{L}\left(\left.\left(\log\frac{dP_{n,t}}{dP_{n,s}}\right)_{t\in I}\right|P_{n,s}\right)\rightarrow\mathcal{L}\left(\left.\left(\log\frac{dP_{t}}{dP_{s}}\right)_{t\in I}\right|P_{s}\right)
\]
weakly converge for all $s\in\Theta$ and all $I\subset\Theta$, $\left|I\right|<\infty.$

Let $D$ denote a standard Borel space and $\mathcal{D}$ its Borel
$\sigma$-field on $D$.

\begin{thm}\label{main_convolution_theorem} Suppose that the
sequence of experiments $E_{n}$ converges weakly to a limit experiment
$E$. Let 
\begin{equation}
T_{n}:\Omega_{n}\rightarrow D\label{}
\end{equation}
be a sequence of statistics with values in a standard Borel space
$D$ such that 
\begin{equation}
\mathcal{L}\left(\left.T_{n}\right|P_{n,\vartheta}\right)\rightarrow Q_{\vartheta}\label{schwache_kogz_4_3}
\end{equation}
weakly converges to the some distribution $Q_{\vartheta}$ on $D$
for all $\vartheta\in\Theta$. Then there exists a kernel $K:\Omega\times\mathcal{D}\rightarrow\left[0,1\right]$
with 
\begin{equation}
Q_{\vartheta}=\int K\left(x,\cdot\right)dP_{\vartheta}\label{kernel}
\end{equation}
for all $\vartheta\in\Theta$. In particular, $E$ is more informative
than $\left(D,\mathcal{D},\left\lbrace Q_{\vartheta}:\vartheta\in\Theta\right\rbrace \right)$.
\end{thm}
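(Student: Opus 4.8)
The plan is to prove Theorem \ref{main_convolution_theorem} by the standard Le Cam strategy of passing to the limit via tightness on a suitable space of sub-Markov kernels, but carried out concretely for standard Borel spaces. The key point is that the randomization criterion can be verified \emph{in the limit} once one has extracted a limiting kernel whose finite-dimensional behaviour matches the convergence of the loglikelihood processes. I will first reduce to a countable parameter set, then construct the limit kernel by a diagonal/compactness argument, and finally identify it using weak convergence of the joint laws of $(T_n,\text{loglikelihood increments})$ together with Le Cam's third lemma.

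First I would fix a fixed reference point $\vartheta_0\in\Theta$ and recall that, since $E_n\to E$ weakly, for every finite $I\subset\Theta$ the joint laws of the loglikelihood vector $\bigl(\log\frac{dP_{n,t}}{dP_{n,\vartheta_0}}\bigr)_{t\in I}$ under $P_{n,\vartheta_0}$ converge to those of the limit experiment. Enlarging the vector by $T_n$, which is tight because $\mathcal{L}(T_n\mid P_{n,\vartheta_0})\to Q_{\vartheta_0}$, I may pass to a subsequence along which
\[
\mathcal{L}\Bigl(\bigl(T_n,(\log\tfrac{dP_{n,t}}{dP_{n,\vartheta_0}})_{t\in I}\bigr)\,\Big|\,P_{n,\vartheta_0}\Bigr)\longrightarrow \mathcal{L}\bigl((S,\Lambda_I)\bigr)
\]
for a random variable $S$ on $D$ and the limit loglikelihood vector $\Lambda_I$. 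By Le Cam's third lemma (the general form behind Lemma \ref{conseq_third_lecam_lemma}, applied with the limit experiment in the role of the target), the law of $(T_n,\cdot)$ under $P_{n,t}$ then converges to the law obtained by re-weighting with $\exp$ of the corresponding loglikelihood coordinate, and the $T_n$-marginal of this is exactly $Q_t$. Since $D$ is standard Borel, the conditional distribution of $S$ given the limit loglikelihood $\sigma$-field is realized by a kernel; writing this $\sigma$-field as generated by the limit experiment on $\Omega$ (i.e.\ by $(P_\vartheta)$), one extracts a kernel $K:\Omega\times\mathcal D\to[0,1]$ with $Q_\vartheta=\int K(x,\cdot)\,dP_\vartheta$ for $\vartheta$ in the countable set used; a further dominated-convergence/continuity argument in $\vartheta$, using weak continuity of $\vartheta\mapsto Q_\vartheta$ coming from the convergence, upgrades this to all $\vartheta\in\Theta$.

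The key steps, in order, are: (i) reduce to a countable, upward-directed parameter set and a fixed base point $\vartheta_0$; (ii) use tightness of $\{\mathcal L(T_n\mid P_{n,\vartheta_0})\}$ plus weak convergence of loglikelihoods to extract, along a subsequence, a joint limit $\mathcal L(S,\Lambda)$ on $D\times\mathbb R^{\Theta_0}$; (iii) invoke Le Cam's third lemma to transport this limit to every $P_{n,t}$ and read off that the $D$-marginal under the $t$-reweighting is $Q_t$; (iv) disintegrate $\mathcal L(S,\Lambda)$ — possible because $D$ is standard Borel — to obtain a kernel depending only on the loglikelihood coordinates, and re-express this as a kernel on $(\Omega,\mathcal A)$ via the limit experiment; (v) pass from the countable parameter set to all of $\Theta$ by continuity; (vi) conclude that $E$ is more informative than $(D,\mathcal D,\{Q_\vartheta\})$ by definition of the randomization criterion.

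**The hard part will be** step (iv): making the disintegration of the limiting joint law canonical enough that the resulting kernel lives on $(\Omega,\mathcal A,\{P_\vartheta\})$ rather than merely on the abstract space of loglikelihood coordinates, and checking that the kernel obtained along one subsequence does not depend on the subsequence — so that the full sequence converges and the identity $Q_\vartheta=\int K(x,\cdot)\,dP_\vartheta$ holds simultaneously for all $\vartheta$. This is exactly where the standard-Borel hypothesis on $D$ (and the countable generation of $\mathcal D$) is used, and where one must be careful that the limit loglikelihood process genuinely determines the limit experiment $E$ up to the equivalence that matters for the randomization criterion. The remaining steps are routine applications of tightness, Le Cam's third lemma, and dominated convergence.
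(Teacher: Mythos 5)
Your outline reproduces the paper's architecture (countable subfamily, joint limit of $T_n$ with the loglikelihood process, Le Cam's third lemma, disintegration via sufficiency of the loglikelihood coordinate, pullback of the kernel to $\Omega$, continuity extension), but there is a genuine gap in the choice of base measure. You anchor everything at a fixed reference point $\vartheta_0$ and invoke Le Cam's third lemma to claim that the limit of $\mathcal{L}(T_n\mid P_{n,t})$ is the $\exp$-reweighting of the joint limit taken under $P_{n,\vartheta_0}$, with the $D$-marginal equal to $Q_t$. That step requires contiguity of $(P_{n,t})$ with respect to $(P_{n,\vartheta_0})$, which is neither assumed nor implied by the hypotheses: the theorem is aimed precisely at non-Gaussian limit experiments whose measures need not be mutually absolutely continuous (e.g.\ the endpoint-estimation example of Section \ref{sec:4}, where the limit is a shift family of exponential laws). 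Without contiguity the reweighted law is in general only a subprobability (it captures only the absolutely continuous part), so the identification of the marginal with $Q_t$ fails, the factorization argument for sufficiency of the loglikelihood coordinate cannot even be written in terms of densities with respect to the $\vartheta_0$-limit, and the final identity $Q_\vartheta=\int K(x,\cdot)\,dP_\vartheta$ breaks down on the singular part.

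The paper closes exactly this hole by a mixture construction rather than a fixed base point: after choosing the countable family $\{\vartheta_j\}$ with $\{Q_{\vartheta_j}\}$ dense, it adjoins $P_{n,0}=a_n\sum_{j}2^{-j}P_{n,j}\mathbf{1}_{\Theta_n}(\vartheta_j)$ and $P_0=\sum_j 2^{-j}P_{\vartheta_j}$ to the experiments and proves in Lemma \ref{lem:4.2} that weak convergence of experiments is preserved. Since each $P_{n,j}$ is dominated by a multiple of $P_{n,0}$, contiguity is automatic (first lemma of Le Cam), the third lemma applies exactly and yields $\frac{d\mu_j}{d\mu_0}=\exp(p_j)$ on $\mathbb{R}^{\mathbb{N}}\times D$, sufficiency of $\pi_1$ follows by factorization, and the kernel on $\Omega$ is obtained by inserting $\bigl(\log\frac{dP_j}{dP_0}(\omega)\bigr)_{j\in\mathbb{N}}$ into the conditional kernel, which is well defined because $P_0$ dominates every $P_{\vartheta_j}$. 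If you replace your fixed $\vartheta_0$ by this mixture, the remaining steps of your outline (tightness and subsequence extraction, disintegration using that $D$ is standard Borel, extension from the countable set to all of $\Theta$ by a continuity/denseness argument) go through essentially as in the paper; note also that what you flagged as the hard part, the disintegration in step (iv), is routine once the dominating mixture is in place, whereas the contiguity issue just described is the real substance.
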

 \begin{proof} Recall that on standard Borel spaces
the set of probability measures is a separable metric space w.r.t.
the topology of weak convergence. Thus we may choose a countable dense
subset $\left\lbrace Q_{\vartheta_{j}}:j\in\mathbb{N}\right\rbrace $
of $\left\lbrace Q_{\vartheta}:\vartheta\in\Theta\right\rbrace $.
We will identify by $Q_{\vartheta_{j}}=:Q_{j}$ and $P_{\vartheta_{j}}=:P_{j}$.
Introduce the additional distributions 
\begin{equation}
Q_{0}=\sum_{j=1}^{\infty}\frac{Q_{j}}{2^{j}}\;\mbox{and}\; P_{0}=\sum_{j=1}^{\infty}\frac{P_{j}}{2^{j}}.\label{dominating_measure}
\end{equation}
Write shortly $P_{n,j}:=P_{n,\vartheta_{j}}$. Define 
\begin{equation}
P_{n,0}:=a_{n}\sum_{j=1}^{\infty}\frac{P_{n,j}}{2^{j}}\mathbf{1}_{\Theta_{n}}(\vartheta_{j}),\label{eq:4:5}
\end{equation}
where $a_{n}$ are normalizing constants. The proof of Theorem \ref{main_convolution_theorem}
relies on two lemmas. First we add $P_{n,0}$ and $P_{0}$ to our
experiments.

\begin{lem}\label{lem:4.2} If $E_{n}\rightarrow E$ converges
weakly then also 
\[
\left(\Omega_{n},\mathcal{A}_{n},\left\lbrace P_{n,j}:j\in\mathbb{N}_{0}\right\rbrace \right)\rightarrow\left(\Omega,\mathcal{A},\left\lbrace P_{j}:j\in\mathbb{N}_{0}\right\rbrace \right)
\]
converges weakly. \end{lem}
 \begin{proof} Consider the subexperiments
for $I=\left\lbrace 1,\ldots,k\right\rbrace $ and add $P'_{n,0}:=\sum\limits _{j=1}^{m}c_{j}P_{n,j}$
and $P'_{0}:=\sum\limits _{j=1}^{m}c_{j}P_{j}$, where $0<c_{j}<1$,
$\sum\limits _{j=1}^{m}c_{j}=1$, $m\in\mathbb{N}$ for $n$ large
enough. It is easy to see that the experiments $\left\lbrace P'_{n,0},P_{n,1},\ldots,P_{n,k}\right\rbrace $
weakly converge to $\left\lbrace P'_{0},P_{1},\ldots,P_{k}\right\rbrace $,
since their likelihood processes can be expressed by linear dependence
by the likelihood processes of $E_{n}$ and $E$. If $m$ tends to
infinity we find coefficients $c_{j}$ such that $P'_{n,0}$ tends
to $P_{n,0}$ uniformly in $n$ w.r.t. the norm of total variation.
Thus $P'_{n,0}$ and $P'_{0}$ may be substituted by $P_{n,0}$ and
$P_{0}$ and the convergence of experiments carries over. \end{proof}
In the case of Theorem \ref{main_convolution_theorem} we have \begin{lem}\label{lem:4:4} 

(a) There exist a subsequence $\left\lbrace m\right\rbrace \subset\mathbb{N}$
and a probability measure $\mu_{0}$ on $\mathbb{R}^{\mathbb{N}}\times D$
such that 
\begin{equation}
\mathcal{L}\left(\left.\left(\left(\log\frac{dP_{m,j}}{dP_{m,0}}\right)_{j\in\mathbb{N}},T_{m}\right)\right|P_{m,0}\right)\rightarrow\mu_{0}\label{eq:4:6}
\end{equation}
weakly converges as $m\rightarrow\infty$.

(b) Under $P_{m,j}$ the limit law $\mu_{j}$ of (\ref{eq:4:6}) exists.
We have $\mu_{j}\ll\mu_{0}$ with density 
\begin{equation}
\frac{d\mu_{j}}{d\mu_{0}}\left(x,d\right)=\exp\left(p_{j}\left(x\right)\right)\label{density}
\end{equation}
on $\mathbb{R}^{\mathbb{N}}\times D$ where $p_{j}\left(x\right)$
denote the $j$-th coordinate of $x\in\mathbb{R}^{\mathbb{N}}$.  
\end{lem} 

\begin{proof} Part (a) follows from the tightness of
the marginals. Part (b) is a consequence of the third Lemma of Le
Cam, see van der Vaart (1988), Appendix A1 , Janssen (1998), Sect.
14 or Jacod and Shiryaev (2003). Observe, that Le Cam's third Lemma
also holds for non-Gaussian limit experiments. Only contiguity of
$\left(P_{m,j}\right)_{m}$ w.r.t $\left(P_{m,0}\right)_{m}$ is required.
This condition follows from Lemma \ref{lem:4.2} and the first Lemma
of Le Cam. \end{proof}

The proof of Theorem \ref{main_convolution_theorem} can be completed
by the following arguments. Consider the canonical projections $\pi_{1}:\mathbb{R}^{\mathbb{N}}\times D\rightarrow\mathbb{R}^{\mathbb{N}}$
and $\pi_{2}:\mathbb{R}^{\mathbb{N}}\times D\rightarrow D$. Obviously,
$\pi_{1}$ is sufficient for $\left\lbrace \mu_{j}:j\in\mathbb{N}_{0}\right\rbrace $.
Thus there exists a version of the conditional distribution of $\left(\pi_{1},\pi_{2}\right)$
given $\pi_{1}$ 
\begin{equation}
C:\mathbb{R}^{\mathbb{N}}\times\left(\mathcal{B}^{\mathbb{N}}\otimes\mathcal{D}\right)\rightarrow\left[0,1\right]\label{conditional_distribution}
\end{equation}
which is independent of $j$, i.e. $\mu_{j}=\int C\left(x,\cdot\right)d\mu_{j}^{\pi_{1}}\left(x\right)$.
Now we may choose 
\begin{equation}
K\left(\omega,A\right)=C\left(\left(\log\frac{dP_{j}}{dP_{0}}\left(\omega\right)\right)_{j\in\mathbb{N}},\pi_{2}^{-1}\left(A\right)\right)\label{eq:4:9}
\end{equation}
for $A\in\mathcal{D}$ and (\ref{kernel}) holds for all $\vartheta\in\left\lbrace \vartheta_{j}:j\in\mathbb{N}\right\rbrace $.
That equality can be extended to all $\vartheta\in\Theta$ by the
following continuity arguments. Define new distributions $Q'_{\vartheta}=K(P_{\vartheta})$
for all $\vartheta\in\Theta$ via (\ref{eq:2.6}). Since $Q_{\vartheta_{j}}=Q'_{\vartheta_{j}}$
holds and $\left\lbrace Q_{\vartheta_{j}}:j\in\Theta\right\rbrace $
is dense we have $Q_{\vartheta}=Q'_{\vartheta}$ for all $\vartheta\in\Theta$
and (\ref{kernel}) holds. These arguments finish the proof of Theorem
\ref{main_convolution_theorem}. \end{proof}Observe that $\left\lbrace P_{\vartheta_{j}}:j\in\mathbb{N}\right\rbrace $
and $\left\lbrace Q_{\vartheta_{j}}:j\in\mathbb{N}\right\rbrace $
are dense in $E$ and $\left\lbrace Q_{\vartheta}:\vartheta\in\Theta\right\rbrace $,
respectively, the norm of total variation. By continuity arguments
equation (\ref{kernel}) carries over for all $\vartheta\in\Theta$.
We like to present another example where the convolution theorem holds
for a non Gaussian limit experiment.

\begin{example}{[}Estimating the endpoints of a distribution{]} Let
$f$ be a continuous smooth density on some real interval $\left[a,b\right]$
with $f(a)>0$ and $f(b)>0$ where $f$ vanishes outside of the interval.
We want to estimate the endpoints $a,b$ based on i.i.d. replications
of sample size $n$. A local model is now given by the parameterization
of the endpoints 
\begin{equation}
a+\frac{\vartheta_{1}}{n}\quad\mbox{and}\quad b+\frac{\vartheta_{2}}{n}\label{endpoints}
\end{equation}
for a suitable pair of local parameters $\vartheta=\left(\vartheta_{1},\vartheta_{2}\right)\in\Theta_{n}\subset\mathbb{R}^{2}$.
We will restrict ourselves to the following shift and scale submodel
(\ref{observations}). Let $Y_{1},Y_{2},\ldots,Y_{n}$ be i.i.d. random
variables with common density $f$. Our observations for $1\leq i\leq n$
are 
\begin{equation}
Z_{n,i}:=\left(1+\frac{\vartheta_{2}-\vartheta_{1}}{n\left(b-a\right)}\right)\left(Y_{i}-a\right)+a+\frac{\vartheta_{1}}{n}\label{observations}
\end{equation}
which have just the endpoints (\ref{endpoints}). For suitable parameters
$\vartheta=\left(\vartheta_{1},\vartheta_{2}\right)$ introduce 
\begin{equation}
P_{n,\vartheta}:=\mathcal{L}\left(\left(\left.Z_{n,1},\ldots,Z_{n,n}\right)\right|\vartheta\right)\label{E3}
\end{equation}
and let $Z_{1:n}\leq Z_{2:n}\leq\ldots\leq Z_{n:n}$ denote the order
statistics of (\ref{observations}). We will only sketch the general
results and indicate how to prove convergence of the experiments.
A rigorous proof is only given for the uniform distribution. Let $X_{1}$
and $X_{2}$ denote two independent standard exponential random variables
with $E(X_{i})$=1 for $i=1,2$. Well-known results from extreme value
theory prove that 
\begin{equation}
n\left(Z_{1:n}-a\right)\rightarrow\frac{X_{1}+\vartheta_{1}}{f(a)}\label{E4}
\end{equation}
and 
\begin{equation}
n\left(Z_{n:n}-b\right)\rightarrow\frac{-X_{2}+\vartheta_{2}}{f(b)}\label{E5}
\end{equation}
are convergent in distribution under $\vartheta$. The convergence
of these distributions can also be shown w.r.t. the norm $\Vert\cdot\Vert$
of total variation. Under regularity assumptions concerning the smoothness
of the density $f$ the extreme order statistics $\left(Z_{1:n},Z_{n:n}\right)$
are asymptotically sufficient and weak convergence of the experiments
\begin{eqnarray}
 &  & \left(\mathbb{R}^{n},\mathcal{B}^{n},\left\lbrace P_{n,\vartheta}:\vartheta=(\vartheta_{1},\vartheta_{2})\in\Theta_{n}\right\rbrace \right)\label{E6}\\
 & \rightarrow & \left(\mathbb{R}^{2},\mathcal{B}^{2},\left\lbrace \mathcal{L}\left(\frac{X_{1}+\vartheta_{1}}{f(a)},\frac{-X_{2}+\vartheta_{2}}{f(b)}\right):\vartheta=(\vartheta_{1},\vartheta_{2})\in\Theta\right\rbrace \right)\nonumber 
\end{eqnarray}
holds provided $\Theta_{n}\uparrow\Theta$. The details are figured
out for the uniform distribution only. \end{example} \begin{lem}
The convergence of experiments (\ref{E6}) holds for the uniform density
$f=\mathbf{1}_{[0,1]}$. \end{lem}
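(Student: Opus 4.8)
The plan is to verify directly the defining property of weak convergence of experiments recalled above, namely convergence of every finite-dimensional marginal of the loglikelihood process, exploiting that for the uniform density the likelihood ratios are completely explicit. For $f=\mathbf{1}_{[0,1]}$ we have $a=0$, $b=1$, $f(a)=f(b)=1$, and by (\ref{observations}) the observations $Z_{n,1},\dots,Z_{n,n}$ are i.i.d.\ uniform on the interval $I_{\vartheta}:=[\vartheta_{1}/n,\,1+\vartheta_{2}/n]$, of length $|I_{\vartheta}|=1+(\vartheta_{2}-\vartheta_{1})/n$. Fix $s\in\mathbb{R}^{2}$ and a finite set $J\subset\mathbb{R}^{2}$; for $n$ large all parameters of $J\cup\{s\}$ lie in $\Theta_{n}$. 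On the support $I_{s}^{n}$ of $P_{n,s}$ the event of belonging to $I_{t}^{n}$ is $\{Z_{1:n}\ge t_{1}/n\}\cap\{Z_{n:n}\le 1+t_{2}/n\}$, so the Lebesgue decomposition of $P_{n,t}$ with respect to $P_{n,s}$ gives, $P_{n,s}$-a.e.,
\[
\frac{dP_{n,t}}{dP_{n,s}}=\left(\frac{|I_{s}|}{|I_{t}|}\right)^{n}\mathbf{1}\{nZ_{1:n}\ge t_{1}\}\,\mathbf{1}\{n(Z_{n:n}-1)\le t_{2}\},\qquad t\in J,
\]
with the convention $\log 0=-\infty$ for the corresponding loglikelihoods.

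The deterministic factor is elementary: $n\log(|I_{s}|/|I_{t}|)=n\log\frac{1+(s_{2}-s_{1})/n}{1+(t_{2}-t_{1})/n}\to(s_{2}-s_{1})-(t_{2}-t_{1})=(t_{1}-t_{2})-(s_{1}-s_{2})$. For the random factors I would use elementary extreme value theory: writing $Z_{n,i}=s_{1}/n+|I_{s}|U_{i}$ with $U_{1},\dots,U_{n}$ i.i.d.\ uniform on $[0,1]$, the identity $P(nU_{1:n}>x,\ n(1-U_{n:n})>y)=(1-(x+y)/n)_{+}^{n}\to e^{-x-y}$ shows that $(nU_{1:n},\,n(1-U_{n:n}))$ converges in law to a pair $(X_{1},X_{2})$ of independent standard exponential variables, so that under $P_{n,s}$ one has the joint convergence $(nZ_{1:n},\ n(Z_{n:n}-1))\to(s_{1}+X_{1},\ s_{2}-X_{2})$, which is exactly (\ref{E4})--(\ref{E5}) for $f(a)=f(b)=1$. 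Since the exponential laws are continuous, none of the lines $\{s_{1}+X_{1}=t_{1}\}$ or $\{s_{2}-X_{2}=t_{2}\}$, $t\in J$, carries mass, hence the vectors of indicators $(\mathbf{1}\{nZ_{1:n}\ge t_{1}\})_{t\in J}$ and $(\mathbf{1}\{n(Z_{n:n}-1)\le t_{2}\})_{t\in J}$ converge in distribution to $(\mathbf{1}\{X_{1}\ge t_{1}-s_{1}\})_{t\in J}$ and $(\mathbf{1}\{X_{2}\ge s_{2}-t_{2}\})_{t\in J}$. Combining the three pieces, the process $(\log\frac{dP_{n,t}}{dP_{n,s}})_{t\in J}$ converges in distribution, as a vector of $[-\infty,\infty)$-valued random variables under $P_{n,s}$, to $((t_{1}-t_{2})-(s_{1}-s_{2})+\log\mathbf{1}\{X_{1}\ge t_{1}-s_{1}\}+\log\mathbf{1}\{X_{2}\ge s_{2}-t_{2}\})_{t\in J}$.

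It remains to identify this limit with the loglikelihood process of the limit experiment $E$ in (\ref{E6}). With $R_{\vartheta}:=\mathcal{L}(X_{1}+\vartheta_{1},\,-X_{2}+\vartheta_{2})$, a change of variables gives the Lebesgue density $(u,v)\mapsto e^{\vartheta_{1}-\vartheta_{2}}e^{\,v-u}\mathbf{1}\{u\ge\vartheta_{1},\ v\le\vartheta_{2}\}$ of $R_{\vartheta}$ on $\mathbb{R}^{2}$, whence $R_{s}$-a.e.
\[
\log\frac{dR_{t}}{dR_{s}}(u,v)=(t_{1}-t_{2})-(s_{1}-s_{2})+\log\mathbf{1}\{u\ge t_{1}\}+\log\mathbf{1}\{v\le t_{2}\};
\]
evaluating this at $(u,v)$ distributed under $R_{s}$ as $(s_{1}+X_{1},\,s_{2}-X_{2})$ reproduces the limit found in the previous step. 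Thus all finite-dimensional marginals of the loglikelihood processes of (\ref{E6}) converge and $E_{n}\to E$ weakly.

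The step I expect to require the most care is the bookkeeping forced by the non-regularity of the model: since $P_{n,t}$ is not dominated by $P_{n,s}$ (the supports $I_{t}^{n}$ and $I_{s}^{n}$ differ) one must work through the Lebesgue decomposition and with $[-\infty,\infty)$-valued loglikelihoods, which is the standard framework for such families, cf.\ Strasser (1985), Sect.\ 60, and one must record that the limiting exponential laws charge no boundary line, which is what legitimizes passing to the limit inside the indicators. The only genuine probabilistic input is the asymptotic independence of the sample minimum and maximum of i.i.d.\ uniforms, which is classical. Alternatively one may first reduce $E_{n}$ using the exact sufficiency of $(Z_{1:n},Z_{n:n})$ and note that the densities of $\mathcal{L}((nZ_{1:n},n(Z_{n:n}-1))\,|\,P_{n,\vartheta})$ converge pointwise, hence in total variation by Scheffé, to that of $R_{\vartheta}$; but the loglikelihood computation above already gives the assertion directly.
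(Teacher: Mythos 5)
Your proof is correct, and it takes a genuinely different route from the paper. The paper reduces the experiment by the exact sufficiency of $(Z_{1:n},Z_{n:n})$ and then quotes the approximation result of Reiss (1989) giving convergence of $\mathcal{L}\left(nZ_{1:n},n(Z_{n:n}-1)\,\middle|\,\vartheta\right)$ to $\mathcal{L}(X_{1}+\vartheta_{1},-X_{2}+\vartheta_{2})$ in total variation, and concludes via the standard fact that total variation convergence implies convergence of experiments; this is shorter and in fact delivers a stronger (strong) form of convergence of the reduced experiments. You instead verify the defining property of weak convergence directly: the uniform likelihood ratios are written out explicitly as a deterministic factor times indicators of $\{nZ_{1:n}\ge t_{1}\}$ and $\{n(Z_{n:n}-1)\le t_{2}\}$, the joint law of the normalized extremes is computed by the elementary identity $(1-(x+y)/n)_{+}^{n}\to e^{-x-y}$, the indicators pass to the limit by continuity of the exponential laws, and the limit is identified with the loglikelihood process of the limit experiment by an explicit density computation. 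Your argument is self-contained (no citation of Reiss needed), handles the non-dominated supports correctly through the Lebesgue decomposition and $[-\infty,\infty)$-valued loglikelihoods, and yields exactly the weak convergence asserted in the lemma; the alternative you sketch at the end (sufficiency plus Scheff\'e) is essentially the paper's own proof.
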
 \begin{proof} In this case
the extreme order statistics $\left(Z_{1:n},Z_{n:n}\right)$ are finite
sample sufficient and the experiments 
\begin{equation}
\left\lbrace P_{n,\vartheta}\right\rbrace \:\mbox{and}\:\left\lbrace \left.\mathcal{L}\left(nZ_{1:n},n\left(Z_{n:n}-1\right)\right|\vartheta\right)\right\rbrace \label{E7}
\end{equation}
are equivalent in Le Cam's sense. Thus it is sufficient to prove the
weak convergence of the latter experiment. Consider first $\vartheta=0$.
It is well-known, see Reiss (1989), Section 5.1 and p. 121, that 
\begin{equation}
\left\Vert \left.\mathcal{L}\left(nZ_{1:n},n\left(Z_{n:n}-1\right)\right\vert 0\right)-\mathcal{L}\left(X_{1},-X_{2}\right)\right\Vert \rightarrow0\label{E8}
\end{equation}
holds w.r.t. the norm of total variation. This assertion is due to
the fact that lower and upper extreme become asymptotically independent.
The same result holds under $\vartheta$ where $\mathcal{L}(X_{1},-X_{2})$
is replaced by $\mathcal{L}(X_{1}+\vartheta_{1},-X_{2}+\vartheta_{2})$,
see also (\ref{E4}) and (\ref{E5}). Note that (\ref{E8}) is equivalent
to the $L_{1}$-convergence of the densities which are easy to handle
under the present shift and scale family. It is well-known that the
convergence of distributions w.r.t. total variation implies the convergence
of the underlying experiments. \end{proof} Let now $\vartheta\in\Theta$
belong to a dense set of $\mathbb{R}^{2}$. Then the assumptions of
the convolution theorem hold. Observe that the limit experiment of
(\ref{E6}) is boundedly complete by Wiener's closure theorem, see
Remark \ref{rem:3.4}. Let $T_{n}:\mathbb{R}^{n}\rightarrow\mathbb{R}^{2}$
be a sequence of $\Theta$ regular estimators of the endpoints for
the uniform distribution at $(a,b)=(0,1)$ with 
\begin{equation}
\mathcal{L}\left(\left.n\left(T_{n}-\left(\frac{\vartheta_{1}}{n},1+\frac{\vartheta_{2}}{n}\right)\right)\right|P_{n,\vartheta}\right)\rightarrow\mathcal{L}(Q)\label{E9}
\end{equation}
in distribution for all $\vartheta\in\Theta$. Then $\mathcal{L}(Q)$
is given by a convolution product 
\begin{equation}
\mathcal{L}(Q)=\nu\ast\mathcal{L}(X_{1},-X_{2}).\label{E10}
\end{equation}
Under regularity assumption the same holds for densities $f$ where
the convolution bound arises from the limit experiment (\ref{E6}). 
\begin{rem}
\label{last_remark} 

(a) Limit experiments for densities with jumps were treated by Ibragimov
and Has'minskii (1981), Chap. V, Pflug (1983), Strasser (1985b), Sect.
19 and in the appendix of Janssen and Mason (1990), p. 215. In these
references the reader will find regularity conditions concerning the
density $f$ which ensure weak convergence of the experiments in (\ref{E6}). 

(b) If the density has only a single jump at the lower endpoint then
the limit experiment (\ref{E6}) has to be modified and it is just
$\left\lbrace \mathcal{L}\left(\frac{X_{1}+\vartheta_{1}}{f(a)}:\vartheta_{1}\right)\right\rbrace $.
For this kind of limit experiment Millar (1983), p. 157 already obtained
a convolution theorem which is similar to (\ref{E10}). 
\end{rem}
\bigskip{}
\textbf{Acknowledgments}\\
The authors like to thank W. Hazod who pointed out the counterexample
\ref{ex:3:8}.

\bigskip{}
\parbox[t]{7cm}{%
Arnold Janssen\\
 Mathematical Institute\\
 University of Düsseldorf\\
 Universitätsstrasse 1\\
 40225 Düsseldorf, Germany\\
 janssena@uni-duesseldorf.de %
} \qquad{}%
\parbox[t]{7cm}{%
Vladimir Ostrovski\\
 independent\\
 \\
 Niederbeckstrasse 23\\
 40472 Düsseldorf, Germany\\
 vladimir\_ostrovski@web.de %
} 
\end{document}